\theoremstyle{definition}
\newtheorem{theorem}[equation]{Theorem}
\newtheorem{corollary}[equation]{Corollary}
\newtheorem{proposition}[equation]{Proposition}
\newtheorem{definition}[equation]{Definition}
\newtheorem{rem}[equation]{Remark}
\newtheorem{exam}[equation]{Example}
\numberwithin{equation}{section}
\definecolor{mjo}{rgb}{0,0,.9}
\newcommand{\ZZ}{\mathbb{Z}}
\newcommand{\AGL}{\operatorname{AGL}}
\newcommand{\GL}{\operatorname{GL}}
\global\long\def\gen{\text{\sf GEN}}
\global\long\def\dng{\text{\sf DNG}}
\global\long\def\mex{\operatorname{mex}}
\global\long\def\nim{\operatorname{nim}}
\global\long\def\opt{\operatorname{Opt}}
\global\long\def\pty{\operatorname{pty}}
\global\long\def\type{\operatorname{type}}
\global\long\def\otype{\operatorname{otype}}
\newcommand{\notes}[1]{}
\subjclass[2000]{91A46, 20B30, 20D06, 20D30}
\keywords{impartial game, maximal subgroup, symmetric group, alternating group}
\begin{document}
\setlength{\jot}{0pt} % this is to remove extra space between lines in aligned environment.

\title[Impartial games for generating symmetric and alternating groups]
{Impartial avoidance and achievement games for generating symmetric and alternating groups}

\author{Bret J.~Benesh}
\author{Dana C.~Ernst}
\author{N\'andor Sieben}

\address{
Bret Benesh,
Department of Mathematics,
College of Saint Benedict and Saint John's University,
37 College Avenue South,
Saint Joseph, MN 56374-5011, USA
}
\email{\url{bbenesh@csbsju.edu}}
\address{
Dana Ernst and N\'andor Sieben,
Department of Mathematics and Statistics,
Northern Arizona University,
PO Box 5717,
Flagstaff, AZ 86011-5717, USA
}
\email{\url{Dana.Ernst@nau.edu}, \url{Nandor.Sieben@nau.edu}}

\thanks{This work was conducted during the third author's visit to DIMACS partially enabled through support from the National Science Foundation under grant number \#CCF-1445755.}

\date{\today}

%%%--------------------------------------------------------------%%%

\begin{abstract}
Anderson and Harary introduced two impartial games on finite groups.  Both games are played by two players who alternately select previously-unselected elements of a finite group. The first player who builds a generating set from the jointly-selected elements wins the first game. The first player who cannot select an element without building a generating set loses the second game. We determine the nim-numbers, and therefore the outcomes, of these games for symmetric and alternating groups.
\end{abstract}

%%%--------------------------------------------------------------%%%

\maketitle

%%%--------------------------------------------------------------%%%

\begin{section}{Introduction}

%%%--------------------------------------------------------------%%%

Anderson and Harary~\cite{anderson.harary:achievement} introduced two impartial games \emph{Generate} and \emph{Do Not Generate}
in which two players alternately take turns selecting previously-unselected elements of a finite group $G$. 
The first player who builds a generating set for the group from the jointly-selected elements wins the achievement game $\gen(G)$. 
The first player who cannot select an element without building a generating set loses the avoidance game $\dng(G)$. 
The outcomes of both games were studied for some of the more familiar finite groups, including abelian, dihedral, symmetric, and alternating groups in~\cite{anderson.harary:achievement,Barnes}, 
although the analysis was incomplete for alternating groups.   

Brandenburg studies a similar game in \cite{brandenburg:algebraicGames}. This variation of the game is played on a finitely generated abelian group $G$. 
Players alternate replacing $G$ by the quotient $G / \langle g \rangle$ for a choice of a nontrivial element $g$. 
The player with the last possible move wins.

A fundamental problem in the theory of impartial combinatorial games is finding the nim-number of a game.  
The nim-number determines the outcome of the game and also allows for the easy calculation of the nim-numbers of game sums. The last two authors~\cite{ErnstSieben} developed tools for studying the nim-numbers of both games, which they 
applied in the context of certain finite groups including cyclic, abelian, and dihedral. In~\cite{BeneshErnstSiebenDNG}, we developed a complete set of criteria for determining the nim-numbers of avoidance games and calculated the corresponding
nim-numbers for several families of groups, including symmetric groups. 

The previous work in~\cite{BeneshErnstSiebenDNG,ErnstSieben} 
left open the determination of the nim-numbers of $\dng(A_n)$, $\gen(S_n)$, and $\gen(A_n)$, except for a handful of cases.  
In this paper, we provide a complete classification of the nim-numbers of the games involving the symmetric and alternating groups.  
Since nim-numbers determine the outcome, our classification completes the analysis from~\cite{Barnes}.  In particular, under optimal play, the first player will win the achievement game and lose the avoidance game for most symmetric and alternating groups.  Moreover, building on the work of~\cite{BeneshErnstSiebenDNG}, we lay the foundation for how one might ascertain the nim-numbers for finite simple groups.

The paper is organized as follows. In Section~\ref{sec:Preliminaries}, we recall the basic terminology of impartial games, establish our notation, and review several necessary results 
from~\cite{ErnstSieben}. The following section describes how to compute $\gen(G)$ for $G$ in a family called $\Gamma_1$, which includes the nonabelian alternating and symmetric groups,  as well as all other finite nonabelian simple groups.  In Section~\ref{sec:Symmetric}, we review the known results for $\dng(S_n)$ and compute the nim-numbers for $\gen(S_n)$, and then we find the nim-numbers for $\dng(A_n)$ and $\gen(A_n)$ in Section~\ref{sec:Alternating}. 
We close with some open questions. 

The authors thank the referee for making suggestions that improved the quality of this paper.

\end{section}

%%%--------------------------------------------------------------%%%

\begin{section}{Preliminaries}\label{sec:Preliminaries}

%%%--------------------------------------------------------------%%%

\begin{subsection}{Impartial Games}\label{subsec:Games}

%%%--------------------------------------------------------------%%%

In this section, we briefly recall the basic terminology of impartial games and establish our notation. For a comprehensive treatment of impartial games, see~\cite{albert2007lessons,SiegelBook}. 

An \emph{impartial game} is a finite set $X$ of \emph{positions} together with a starting position and a collection $\{\opt(P)\subseteq X\mid P\in X\}$, where $\opt(P)$ is the set of possible \emph{options} for a position $P$. Two players take turns replacing the current position $P$ with one of the available options in $\opt(P)$. A position $P$ with $\opt(P)=\emptyset$ is called \emph{terminal}.  
The player who moves into a terminal position \emph{wins.}
All games must come to an end in finitely many turns. 

The \emph{minimum excludant} $\mex(A)$ of a set $A$ of ordinals is the smallest ordinal not contained in the set. The \emph{nim-number} of a position $P$ is the minimum excludant 
\[
\nim(P):=\mex\{\nim(Q)\mid Q\in\opt(P)\}
\]
of the set of nim-numbers of the options of $P$. Since the minimum excludant of the empty set is $0$, the terminal positions of a game have nim-number $0$. The \emph{nim-number of a game} is the nim-number of its starting position. 
The nim-number of a game determines the outcome  since the second player has a winning strategy if and only if the nim-number for a game is $0$. The winning strategy is to always pick an option that has nim-number 0. This puts the opponent into a losing position at every turn.

The \emph{sum} of the games $P$ and $R$ is the game $P+R$ whose set of options is 
\[
\opt(P+R):=\{Q+R\mid Q\in\opt(P)\}\cup\{P+S\mid S\in\opt(R)\}.
\]
We write $P=R$ if the second player of the game $P+R$ has a winning strategy. 

The one-pile NIM game with $n$ stones is denoted by the \emph{nimber} $*n$. The set of options of $*n$ is $\opt(*n)=\{*0,\ldots,*(n-1)\}$.  The Sprague--Grundy Theorem~\cite{albert2007lessons,SiegelBook} states that $P=*\nim(P)$ for every impartial game $P$.

\end{subsection}

%%%--------------------------------------------------------------%%%

\begin{subsection}{Achievement and Avoidance Games for Generating Groups}\label{subsec:GamesOnGroups}

%%%--------------------------------------------------------------%%%

We now provide a more precise description of the achievement and avoidance games for generating a finite group $G$. 
For additional details, see~\cite{anderson.harary:achievement,Barnes,ErnstSieben}.   For both games, the starting position is the empty set.  The players take turns choosing previously-unselected elements to create jointly-selected sets of elements, which are the positions of the game.

In the avoidance game $\dng(G)$, the first player chooses $x_{1}\in G$ such that $\langle x_{1}\rangle\neq G$ and at the $k$th turn, the designated player selects $x_{k}\in G\setminus\{x_{1},\ldots,x_{k-1}\}$ such that $\langle x_{1},\ldots,x_{k}\rangle\neq G$ to create the position $\{x_1,\ldots,x_k\}$. Notice that the positions of $\dng(G)$ are exactly the non-generating subsets of $G$. A position $Q$ is an option of $P$ if $Q=P \cup \{g\}$ for some $g \in G \setminus P$, where $\langle Q\rangle\neq G$.  The player who cannot select an element without building a generating set is the loser.  We note that there is no avoidance game for the trivial group since the empty set generates the whole group.

In the achievement game $\gen(G)$, the first player chooses any $x_{1}\in G$ and at the $k$th turn, the designated player selects $x_{k}\in G\setminus\{x_{1},\ldots,x_{k-1}\}$ to create the position $\{x_1,\ldots,x_k\}$. A player wins on the $n$th turn if $\langle x_{1},\ldots,x_{n}\rangle=G$. In this case, the positions of $\gen(G)$ are subsets of terminal positions, which are certain generating sets of $G$.  
Note that the second player has a winning strategy if $G$ is trivial since $\langle \emptyset \rangle = G$, so the first player has no legal opening move.
In particular, $\gen(S_1)=\gen(A_1)=\gen(A_2)=*0$.

Maximal subgroups play an important role because a subset $S$ of a finite group is a generating set if and only if $S$ is not contained in any maximal subgroup. Let $\mathcal{M}$ be the set of maximal subgroups of $G$. Following~\cite{ErnstSieben}, the set
of \emph{intersection subgroups} is defined to be the set 
\[
\mathcal{I}:=\{\cap\mathcal{N}\mid\emptyset\not=\mathcal{N\subseteq\mathcal{M}}\}
\]
containing all possible intersections of maximal subgroups. Note that the elements of $\mathcal{I}$ are in fact subgroups of $G$. The smallest intersection subgroup is the Frattini subgroup $\Phi(G)$ of $G$, and we say that an intersection subgroup $I$ is non-Frattini if $I \not=\Phi(G)$. Not every subgroup of $G$ need be an intersection subgroup.  For instance, no proper subgroup of a nontrivial Frattini subgroup is an intersection subgroup. Even a subgroup that contains the Frattini subgroup need not be intersection subgroup, as shown in \cite[Example 1]{BeneshErnstSiebenDNG}.

The set $\mathcal{I}$ of intersection subgroups is partially ordered by inclusion.  For each $I \in \mathcal{I}$, we let 
\[
X_{I}:=\mathcal{P}(I)\setminus\cup\{\mathcal{P}(J)\mid J\in \mathcal{I},J<I\}
\]
\noindent
be the collection of those subsets of $I$ that are not contained in any other intersection subgroup smaller than $I$. We define $\mathcal{X}:=\{X_{I}\mid I\in\mathcal{I}\}$ and call an element of $\mathcal{X}$ a \emph{structure class}.  The starting position $\emptyset$ is in $X_{\Phi(G)}$ for both games.

The set $\mathcal{X}$ of structure classes is a partition of the set of game positions of $\dng(G)$.  The partition $\mathcal{X}$ is compatible with the option relationship between game positions~\cite[Corollary~3.11]{ErnstSieben}: if $X_{I},X_{J}\in\mathcal{X}$ and $P,Q\in X_{I}\ne X_{J}$, then $\opt(P)\cap X_{J}\not=\emptyset$ if and only if $\opt(Q)\cap X_{J}\ne\emptyset$.  We say that $X_{J}$ is an \emph{option} of $X_{I}$ and write $X_{J}\in\opt(X_{I})$ if $\opt(I)\cap X_{J}\not=\emptyset$.  

For the achievement game $\gen(G)$, we must include an additional structure
class $X_{G}$ containing terminal positions. A subset $S\subseteq G$ belongs to $X_{G}$ whenever $S$ generates $G$ while $S\setminus\{s\}$ does not for some $s\in S$. Note that we are abusing notation since $X_{G}$ may not contain $G$. The positions of $\gen(G)$ are the positions of $\dng(G)$ together with the elements of $X_{G}$.  The set $\mathcal{Y}:=\mathcal{X}\cup\{X_{G}\}$ is a partition of the set of game positions of $\gen(G)$. As in the avoidance case, the partition $\mathcal{Y}$ is compatible with the option relationship between positions~\cite[Corollary~4.3]{ErnstSieben}.

For any position $\{g\}$ of either $\dng(G)$ or $\gen(G)$, we define $\lceil g \rceil$ to be the unique element of $\mathcal{I}\cup\{G\}$ such that $\{g\}\in X_{\lceil g \rceil}$. 
For example, if $e$ is the identity of $G$, then $\lceil e \rceil=\Phi(G)$.

We define $\pty(S)$ of a set $S$ to be $0$ if $|S|$ is even and $1$ if $|S|$ is odd, and we say that $S$ is  \emph{even} if $\pty(S)=0$ and \emph{odd} if $\pty(S)=1$. The \emph{type} of the structure class $X_{I}$ is the triple 
\[
\type(X_{I}):=(\pty(I),\nim(P),\nim(Q)),
\]
where $P,Q\in X_{I}$ with $\pty(P)=0$ and $\pty(Q)=1$; by~\cite[Proposition~3.15]{ErnstSieben}, this is well-defined. The type can be calculated as in the following example:  
if an odd $X_I$ has options of type $(0,a,b)$ and $(1,c,d)$, then $\type(X_I)=(1,\mex\{b,d,x\},x)$, where $x=\mex\{a,c\}$.  In the achievement case, the type of the structure class $X_{G}$ is defined to be $\type(X_{G})=(\pty(G),0,0)$.  The \emph{option type} of $X_{I}$ is the set 
\[
\otype(X_{I}):=\{\type(X_{K})\mid X_{K}\in\opt(X_{I})\}.
\]

For both the avoidance and achievement games, the nim-number of the game is the same as the nim-number of the initial position $\emptyset$, which is an even subset of $\Phi(G)$. Because of this, the nim-number of the game is the second component of $\type(X_{\Phi(G)})$.

\end{subsection}

%%%--------------------------------------------------------------%%%

\begin{subsection}{Achievement and Avoidance Game Strategy and Intuition}

%%%--------------------------------------------------------------%%%

Let $G$ be a nontrivial finite group.  In general, describing strategies in terms of nim-numbers is difficult, however we are able to provide a full description of the relationship between the strategy and the nim-number of $\dng(G)$. A complete characterization is elusive for $\gen(G)$. 

After playing the avoidance game, the two players can notice that they spent the entire game choosing elements from a single maximal subgroup $M$.  The first player will have won if $M$ is odd, and the second player will have won if $M$ is even.  These facts immediately yield the winning strategies for the two players.  The second player should attempt to choose any legal element of even order since that will ensure a final maximal subgroup of even order, while the first player should try to select an element $g \in G$ such that $\langle g,t \rangle=G$ for any element $t \in G$ of even order.  Of course, only one of these strategies can be successful.  Note that a winning strategy for the first player is equivalent to choosing an element $g \in G$ that is not contained in any even maximal subgroup.    The nim-numbers of the avoidance game were characterized in the following theorem.

\begin{theorem}\cite[Theorem~6.3]{BeneshErnstSiebenDNG}
Let $G$ be a nontrivial finite group.
\begin{enumerate}
\item If $|G|=2$ or $G$ is odd, then $\dng(G)=*1$.
\item If $G$ is cyclic with $|G| \equiv_4 0$ or the set of even maximal subgroups covers $G$, then $\dng(G)=*0$.
\item Otherwise, $\dng(G)=*3$.
\end{enumerate}
\end{theorem}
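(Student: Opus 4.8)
The plan is to determine the type $\type(X_{\Phi(G)})$ of the structure class of the starting position, whose second coordinate is the nim-number of $\dng(G)$, by computing the types of all structure classes, working down the poset $\mathcal{I}$ of intersection subgroups from the maximal subgroups toward $\Phi(G)$. The base of this computation is the maximal subgroups. If $M$ is a maximal subgroup and $S\in X_M$, then $S$ is contained in no intersection subgroup strictly below $M$, in particular in no subgroup $M\cap M'$ with $M'$ a maximal subgroup other than $M$; hence $S$ lies in no maximal subgroup except $M$, so every legal move from $S$ must add an element of $M$, and $X_M$ is closed under the option relation. Inside $X_M$ the game is simply to complete the set $M$, so $\nim(S)$ depends only on the parity of $|M|-|S|$, and one reads off $\type(X_M)=(\pty(M),\pty(M),1-\pty(M))$.

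For part (1), note that ``$|G|=2$ or $G$ is odd'' is equivalent to ``$G$ has no even maximal subgroup'': a group of even order greater than $2$ has an involution, lying in a proper, hence maximal, subgroup of even order. In this situation every intersection subgroup has odd order, so every structure class is odd. The base case gives the maximal ones type $(1,1,0)$, and the type-propagation rule shows that an odd structure class all of whose options have type $(1,1,0)$ again has type $(1,1,0)$; so downward induction on $\mathcal I$ gives $\type(X_I)=(1,1,0)$ for every $I$, and in particular $\dng(G)=*1$.

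For part (2), suppose $G$ has an even maximal subgroup. If the even maximal subgroups cover $G$, or if $G$ is cyclic with $4\mid|G|$ --- in which case every maximal subgroup of $G$ is even, so every legal opening move, being a non-generator, lies in an even maximal subgroup --- then the first player has no legal opening move lying outside all even maximal subgroups; by the strategy analysis recalled above, the second player then wins, so $\dng(G)=*0$. In every other case the first player has a winning opening move, so $\nim(\emptyset)\ne 0$; equivalently, some singleton $\{g\}$ is an option of $\emptyset$ with nim-number $0$. Moreover, such a $G$ then has both an even maximal subgroup and an odd maximal subgroup (for non-cyclic $G$, a winning opening move lies in an odd maximal subgroup; for cyclic $G$, which is then $\mathbb{Z}_{2m}$ with $m>1$ odd, the index-$2$ subgroup is an odd maximal subgroup). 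To finish part (3), the plan is to show that the singleton options of $\emptyset$ realize exactly the nim-numbers $0$, $1$, and $2$: the value $1$ is produced by a $g$ with $\lceil g\rceil$ an even maximal subgroup (the third coordinate of $(0,0,1)$), and the value $2$ by $g=e$, since $\lceil e\rceil=\Phi(G)$ and the options of $\{e\}$ include positions of nim-number $0$ (lying in a structure class of type $(0,0,1)$) and of nim-number $1$ (lying in a structure class of type $(1,1,0)$), forcing $\nim(\{e\})=\mex\{0,1,\dots\}=2$. One then concludes $\nim(\emptyset)=\mex\{0,1,2,\dots\}=3$.

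I expect the decisive work in part (3) to be twofold: producing, in every case, singleton options of the required nim-numbers $1$ and $2$ --- which forces one to locate elements whose smallest intersection subgroup is a maximal subgroup of the appropriate parity, and to verify the claimed options of $\{e\}$ --- and, above all, ruling out the nim-number $3$ among the singleton options, i.e. bounding the second and third coordinates of $\type(X_I)$ for every intersection subgroup $I$ that can arise as some $\lceil g\rceil$. This last point is the main obstacle: it demands a case analysis according to whether $I$ is a maximal subgroup, a non-Frattini intersection subgroup properly contained in a maximal subgroup, or the Frattini subgroup itself, and when $\Phi(G)$ is nontrivial the computation of $\type(X_{\Phi(G)})$ becomes self-referential and requires the finer structural machinery developed for avoidance games. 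Showing that the relevant nimbers never exceed $3$ is the crux of the argument.
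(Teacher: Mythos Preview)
This theorem is not proved in the present paper; it is quoted from \cite{BeneshErnstSiebenDNG}, and the surrounding strategy discussion is commentary on the statement rather than an independent argument. There is therefore no proof here to compare your proposal against.

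On the substance: your plan is the natural one, and part~(1) is correct and essentially complete. For part~(2), though, the appeal to ``the strategy analysis recalled above'' is not a proof. That paragraph is informal, and the obvious second-player strategy --- fix an even maximal subgroup $M$ containing the first move and always reply inside $M$ --- can fail: the first player may later add some $g\notin M$ while the position still sits inside some other maximal subgroup, which need not be even (this actually occurs whenever $G$ has an odd maximal subgroup each of whose elements is individually covered by even maximals, so that the covering hypothesis holds but the position can nonetheless be steered into an odd maximal subgroup). Getting $\dng(G)=*0$ in case~(2) already requires the type bookkeeping you set up for part~(3).

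For part~(3) you correctly identify the crux and are explicit that you have not carried it out. The work is exactly as you say: show that only finitely many types occur and that they are closed under the option recursion, so that the third coordinates feeding into $\nim(\emptyset)$ are precisely $\{0,1,2\}$. One further gap in your sketch: to produce a singleton option of nim-value~$1$ you posit an element $g$ with $\lceil g\rceil$ an even \emph{maximal} subgroup, i.e., an element lying in a unique maximal subgroup; the existence of such an element is not automatic and needs justification. All of this is the classification carried out in \cite{BeneshErnstSiebenDNG}.
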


It follows that a nim-number of $0$ indicates that there is always an element of even order for the second player to choose, regardless of the first player's initial choice.  
Notice that $G$ only has odd maximal subgroups if $|G|=2$ or $G$ is odd, so a nim-number of $1$ indicates that the first player cannot lose, independent of strategy.  
A nim-number of $3$ indicates that there is an element not contained in any even maximal subgroup, and choosing this element on the first move is sufficient to guarantee victory for the first player.  
However, the first player may lose by failing to choose this element.  
Thus, the difference between $\dng(G)=*1$ and $\dng(G)=*3$ is that the first player needs to make a wise first move to ensure a win only in the latter case.  

Now let $G$ be $S_n$ or $A_n$ for $n \geq 5$.  We provide a description of the strategy for $\gen(G)$, but different positive nim-numbers for $\gen(G)$ do not seem to translate to any obvious group-theoretic description.  The group $G$ has the property that for every nontrivial $g \in G$, there is an $x \in G$ such that $\langle g,x\rangle = G$.  The first player can win by choosing the identity with the first selection and an $x$ such that $\langle g,x\rangle=G$ for the third move after the second player chooses a nonidentity element $g$. 

\end{subsection}

\end{section}

%%%--------------------------------------------------------------%%%

\begin{section}{Achievement Games for $\Gamma_1$ Groups}

%%%--------------------------------------------------------------%%%

In this section we develop some general results about $\gen(G)$ for a special class of groups containing all of the nonabelian alternating groups and all but one of the nonabelian symmetric groups.

Following~\cite[Definition 1.1]{Foguel}, two elements $x$ and $y$ of a group $G$ are called \emph{mates} if $\langle x,y \rangle=G$.  We say that a finite nonabelian 
group is in $\Gamma_1$ if every nontrivial element of the group has a mate (see~\cite[Definition 1.01]{BrennerWiegoldI} 
for a general definition of $\Gamma_r$).  It is known that $S_n \in \Gamma_1$ for $n \not\in \{1,2,4\}$ ~\cite{isaacs1995generating} and $A_n \in \Gamma_1$ for $n \not\in\{1,2,3\}$~\cite{chigira1997generating}.  Moreover, Guralnick and Kantor~\cite{Guralnick2000} proved that every finite nonabelian simple group is in $\Gamma_1$.

For the remainder of this paper, we will rely on $\Gamma_1$ groups having trivial Frattini subgroups. 

\begin{proposition}\label{prop:GammaFrattini}
If $G \in \Gamma_1$, then $\Phi(G)=\{e\}$.
\end{proposition}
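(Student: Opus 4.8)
The plan is to recall that the Frattini subgroup $\Phi(G)$ is the set of \emph{non-generators} of $G$: an element $g$ lies in $\Phi(G)$ if and only if, whenever $\langle g, S\rangle = G$ for some $S \subseteq G$, already $\langle S\rangle = G$. This is the standard characterization of $\Phi(G)$ as the intersection of all maximal subgroups. I would take $g \in \Phi(G)$ and aim to show $g = e$.

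Suppose for contradiction that $g \neq e$. Since $G \in \Gamma_1$, the nontrivial element $g$ has a mate $y$, i.e.\ $\langle g, y\rangle = G$. Applying the non-generator property with $S = \{y\}$, we conclude $\langle y\rangle = G$, so $G$ is cyclic. But a nontrivial cyclic group is abelian, contradicting the fact that every group in $\Gamma_1$ is nonabelian by definition. Hence no such $g$ exists and $\Phi(G) = \{e\}$.

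The argument is short and the only thing to be careful about is invoking the right characterization of the Frattini subgroup; there is no real obstacle here. One could alternatively phrase it directly in terms of maximal subgroups: if $g \neq e$ had a mate $y$, then $y$ lies in some maximal subgroup $M$ (as $\langle y\rangle \neq G$ since $G$ is nonabelian hence noncyclic), so $g \notin M$ because $\langle g, y\rangle = G$ forces $g \notin M$; thus $g \notin \bigcap \mathcal{M} = \Phi(G)$. Either formulation gives the claim.
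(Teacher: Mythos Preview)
Your proof is correct and follows essentially the same argument as the paper: assume a nontrivial $g\in\Phi(G)$, use its mate $y$ and the non-generator characterization of $\Phi(G)$ to conclude $G=\langle y\rangle$ is cyclic, contradicting nonabelianity. The alternative maximal-subgroup phrasing you offer is also fine but is just a repackaging of the same idea.
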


\begin{proof}
For sake of a contradiction suppose that $x$ is a nontrivial element of $\Phi(G)$.  Since $G \in \Gamma_1$, $x$ has a mate $y$.  Then $G=\langle x,y \rangle \leq \langle \Phi(G),y \rangle = \langle y \rangle$, where the last equality holds because $\Phi(G)$ is the set of all nongenerators of $G$ (see~\cite[Problem 2.7]{Isaacs1994}, for instance).  This implies $G$ is cyclic and hence abelian, which contradicts the assumption that $G \in \Gamma_1$. 
\end{proof}

The following general results about $\Gamma_1$ groups will be used to determine the nim-numbers of $\gen(S_n)$ and $\gen(A_n)$.
We define $T:=\{t_0,\ldots,t_4\}$, where the types $t_i$ are defined in Table~\ref{table:typetable}.

\begin{table}[h]
\begin{tabular}{@{}l@{}}
% \toprule
$t_0:=(\pty(G),0,0)$ \\
$t_1:=(1,1,2)$ \\
$t_2:=(1,2,1)$ \\
$t_3:=(1,4,3)$ \\
$t_4:=(0,1,2)$ \\
\end{tabular}
\hspace{1cm}
\begin{tabular}{@{}ll@{}}
\toprule
$\otype(X_I)$ & $\type(X_I)$ \\
\midrule
$\{t_0\}$ & $t_2$ or $t_4$ \\  
$\{t_0,t_1\}$ & $t_1$  \\ 
$\{t_0,t_2\}$ &  $t_2$ \\
$\{t_0,t_3\}$ &  $t_2$ \\
$\{t_0,t_1,t_2\}$ &  $t_3$ \\
$\{t_0,t_1,t_3\}$ &  $t_1$ \\
$\{t_0,t_2,t_3\}$ &  $t_2$ \\
$\{t_0,t_1,t_2,t_3\}$ &  $t_3$ \\
\bottomrule
\end{tabular}
\hspace{.25cm}
\begin{tabular}{@{}ll@{}}
\toprule
$\otype(X_I)$ & $\type(X_I)$ \\
\midrule
$\{t_0,t_4\}$ &  $t_1$ or $t_4$ \\
$\{t_0,t_1,t_4\}$ &  $t_1$ \\
$\{t_0,t_2,t_4\}$ &  $t_3$ \\
$\{t_0,t_3,t_4\}$ &  $t_1$ \\
$\{t_0,t_1,t_2,t_4\}$ &  $t_3$ \\
$\{t_0,t_1,t_3,t_4\}$ &  $t_1$ \\
$\{t_0,t_2,t_3,t_4\}$ &  $t_3$ \\
$\{t_0,t_1,t_2,t_3,t_4\}$ &  $t_3$ \\
\bottomrule
\end{tabular}

\vspace{0.1in}

\caption{\label{table:typetable}  
 Complete list of possible option types for a nonterminal structure class $X_I$ in $\gen(G)$ with $I \not=\Phi(G)$ when $G$ is a $\Gamma_1$ group.  Note that $\pty(I)$ and $\otype(X_I)$ determine $\type(X_I)$.
}
\end{table}

\begin{proposition}\label{prop:PossibleTypes}
If $I$ is a non-Frattini intersection subgroup of a $\Gamma_1$ group $G$, then $\type(X_I)$ in $\gen(G)$
satisfies
\[
\type(X_I) =  
   \begin{dcases} 
      t_1,  & \text{$I$ is odd, $\otype(X_I) \cap \{t_1,t_4\} \not= \emptyset$, $t_2 \not\in\otype(X_I)$} \\
      t_2,  & \text{$I$ is odd, $\otype(X_I) \cap \{t_1,t_4\} = \emptyset$} \\
      t_3,  & \text{$I$ is odd, $\otype(X_I) \cap \{t_1,t_4\} \not= \emptyset$, $t_2 \in\otype(X_I)$} \\
      t_4, & \text{$I$ is even}.
   \end{dcases}
\]  
\end{proposition}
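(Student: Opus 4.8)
The plan is to verify the four cases by a careful analysis of the type-computation rule stated in Section~\ref{subsec:GamesOnGroups}, combined with the tabulation in Table~\ref{table:typetable}. The key structural input is that $G\in\Gamma_1$ forces $\Phi(G)=\{e\}$ (Proposition~\ref{prop:GammaFrattini}), so that for every one-element position $\{g\}$ with $g\neq e$ we have $\lceil g\rceil\neq\Phi(G)$; this is what guarantees that $t_0$ (the type of the terminal structure class $X_G$) is always an option type of a non-Frattini $X_I$ — indeed, since $G\in\Gamma_1$, every nontrivial $g\in I$ has a mate, so some subset of $I$ extends to a generating set, i.e. $X_G\in\opt(X_I)$. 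Hence $t_0\in\otype(X_I)$ always, which explains why every row of Table~\ref{table:typetable} contains $t_0$. First I would establish that claim precisely: for a non-Frattini intersection subgroup $I$, $X_G$ is always an option of $X_I$, and moreover any option $X_K$ of $X_I$ with $K\neq G$ is itself a non-Frattini intersection subgroup, so by induction (on the poset $\mathcal{I}$, working downward from $G$) its type lies in $T=\{t_0,\ldots,t_4\}$.

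Next I would prove, by induction on the height of $I$ in $\mathcal{I}$, the restricted claim that for every non-Frattini $I$ the type $\type(X_I)$ lies in $\{t_1,t_2,t_3,t_4\}$, with $\type(X_I)=t_4$ exactly when $I$ is even and $\type(X_I)\in\{t_1,t_2,t_3\}$ when $I$ is odd. This is exactly the content of Table~\ref{table:typetable}: every admissible option type is a subset of $T$ containing $t_0$, and one reads off $\type(X_I)$ from $\pty(I)$ together with $\otype(X_I)$. So the real work is to recompute each of the sixteen table entries directly from the $\mex$-recursion
\[
\type(X_I)=(\pty(I),\ \mex\{b,d,\ldots,x\},\ x),\qquad x=\mex\{a,c,\ldots\},
\]
where the $a,c,\ldots$ (resp. $b,d,\ldots$) range over second (resp. third) components of the option types, with the parity of the even/odd roles of the options flipped relative to $\pty(I)$ — precisely as in the worked example in the text. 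Concretely: when $I$ is even, an even option contributes to the odd-child slot and vice versa; when $I$ is odd, the roles are as written. Doing this for $\otype=\{t_0\}$, $\{t_0,t_1\}$, $\{t_0,t_2\}$, etc., and separately for the two parities of $I$, reproduces the table; the statement of the proposition is then just a repackaging of the "odd" rows according to whether $\otype(X_I)$ meets $\{t_1,t_4\}$ and whether it contains $t_2$, together with the single "even" conclusion $\type(X_I)=t_4$.

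The one genuine subtlety — and the step I expect to be the main obstacle — is ensuring the list of option types in Table~\ref{table:typetable} is \emph{exhaustive}: that no non-Frattini $X_I$ in a $\Gamma_1$ group can have an option whose type lies outside $T$, and that the sixteen listed subsets of $T$ are the only ones that can actually occur (e.g. one must rule out $\otype(X_I)=\{t_0,t_4\}$ occurring with $I$ odd in a way inconsistent with the induction, and one must check closure — that the type assigned to $X_I$ by the table is again in $T$, so the induction goes through). This is handled by the simultaneous induction: assuming every option $X_K$ with $K<I$, $K\neq\Phi(G)$ has type in $\{t_1,t_2,t_3,t_4\}$ and $X_G$ has type $t_0$, one checks that the $\mex$-recursion outputs a type in $T$ for every one of the possible option-type sets, and that the parity bookkeeping is consistent. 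A secondary point to be careful about is that $t_0=(\pty(G),0,0)$ depends on $\pty(G)$, but since only its second and third components ($0$ and $0$) enter the $\mex$-computations, the parity of $G$ is irrelevant to the recursion and the table is valid uniformly; I would note this explicitly. Once exhaustiveness and closure are in hand, the proposition follows by reading off the table.
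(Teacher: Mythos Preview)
Your proposal is correct and follows essentially the same approach as the paper: establish that $t_0\in\otype(X_I)$ for every non-Frattini $I$ (using $G\in\Gamma_1$), then run a structural induction over $\mathcal{I}$ using the closure property that $\otype(X_I)\subseteq T$ forces $\type(X_I)\in T$, with Table~\ref{table:typetable} supplying the case analysis. The paper's proof is considerably terser and makes one point more explicit than you do: it invokes Lagrange's Theorem to explain why an even $I$ can only have even (hence type-$t_0$ or type-$t_4$) options, which is what restricts the even case to the two rows $\{t_0\}$ and $\{t_0,t_4\}$; your phrase ``parity bookkeeping'' gestures at this but does not name it, and your parenthetical about ``ruling out $\otype(X_I)=\{t_0,t_4\}$ with $I$ odd'' is slightly off target --- that case is harmless (it yields $t_1$), whereas the case that actually needs ruling out is $\otype(X_I)\not\subseteq\{t_0,t_4\}$ with $I$ even.
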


\begin{proof}
The type of a terminal structure class must be $t_0$.  The option type of any nonterminal structure class $X_I$ of a $\Gamma_1$ group with $I \not=\Phi(G)$ contains $t_0$.  Structural induction and the calculations in Table~\ref{table:typetable} shows that $\otype(X_I)\subseteq T$ implies $\type(X_I)\in T$. That is, no types outside of $T$ arise.  Note that a structure class of type $t_4$ can never have an odd option by Lagrange's Theorem, so $t_4$ only appears if $\otype(X_I)$ is a subset of $\{t_0,t_4\}$.
\end{proof}

For odd $G$, only some of the calculations from Table~\ref{table:typetable} are needed since the only possible option types are $\{t_0\}$ and $\{t_0,t_2\}$.  This observation is sufficient to determine $\gen(G)$ for odd $G$. 

\begin{corollary}\label{cor:OddType}
If $I$ is a non-Frattini intersection subgroup of an odd $\Gamma_1$ group $G$, then $\type(X_I)$ in $\gen(G)$ is $t_2$.
\end{corollary}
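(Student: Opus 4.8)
The plan is to specialize Proposition~\ref{prop:PossibleTypes} to the odd case and verify that only the $t_2$ branch can ever occur, proceeding by structural induction on the poset of intersection subgroups. The key observation is that when $G$ is odd, Lagrange's Theorem forces every intersection subgroup $I$ to be odd, and in particular every position in every structure class $X_I$ is a subset of an odd group. First I would recall from the proof of Proposition~\ref{prop:PossibleTypes} that for any non-Frattini intersection subgroup $I$ of a $\Gamma_1$ group, $\otype(X_I)$ contains $t_0$ (the type of the terminal class $X_G$, which here is $(\pty(G),0,0)=(1,0,0)$ since $G$ is odd) and is a subset of $T=\{t_0,\dots,t_4\}$.

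Next I would argue that the types $t_1=(1,1,2)$, $t_3=(1,4,3)$, and $t_4=(0,1,2)$ simply cannot appear as the type of any structure class when $G$ is odd. The type $t_4$ is excluded immediately: its parity component is $0$, but by Lagrange every intersection subgroup of an odd group is odd, so no structure class has even parity. For $t_1$ and $t_3$, I would invoke the induction hypothesis: if no structure class strictly below $X_I$ has type $t_1$, $t_3$, or $t_4$, then $\otype(X_I)\subseteq\{t_0,t_2\}$. Consulting Table~\ref{table:typetable}, the only two option types that are subsets of $\{t_0,t_2\}$ are $\{t_0\}$ and $\{t_0,t_2\}$; the first yields type $t_2$ or $t_4$, and the second yields $t_2$. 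Since $t_4$ is already excluded by the parity argument, both cases give $\type(X_I)=t_2$. This closes the induction, since the minimal non-Frattini intersection subgroups have option type exactly $\{t_0\}$ (their only proper-subset option lands in $X_{\Phi(G)}$, and the terminal class supplies $t_0$), and actually for a minimal such $I$ the only relevant option class besides $X_G$ must have type $t_2$ by the same parity consideration — or one simply notes directly that the $\{t_0\}$ row forces $t_2$ once $t_4$ is ruled out.

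The main subtlety — not really an obstacle, but the point requiring care — is making the base case and the "$\otype(X_I)=\{t_0\}$" case airtight: one must be sure that ruling out $t_4$ on parity grounds is legitimate at every stage, including for the smallest non-Frattini classes, and that the terminal class $X_G$ genuinely contributes $t_0=(1,0,0)$ to every such option type (this is exactly the content of the first two sentences of the proof of Proposition~\ref{prop:PossibleTypes}). Once those are in hand, the corollary follows by a short structural induction citing only the two relevant rows of Table~\ref{table:typetable} together with Lagrange's Theorem.
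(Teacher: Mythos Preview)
Your approach matches the paper's one-sentence justification preceding the corollary: rule out $t_4$ via Lagrange (every intersection subgroup of an odd group is odd), and then only the rows $\{t_0\}$ and $\{t_0,t_2\}$ of Table~\ref{table:typetable} can occur, both of which yield $t_2$ when $I$ is odd.

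There is, however, a genuine slip in your description of the base case. Options of a position are obtained by \emph{adjoining} an element, so the options of $X_I$ are structure classes $X_J$ with $J \supsetneq I$ (together with the terminal class $X_G$), not classes indexed by smaller intersection subgroups; your parenthetical ``their only proper-subset option lands in $X_{\Phi(G)}$'' has the option relation reversed. Consequently the structural induction runs from the top of the poset downward: the base case consists of the \emph{maximal} subgroups $M$, for which $\otype(X_M)=\{t_0\}$, while the minimal non-Frattini intersection subgroups are handled last, not first. Once the direction of the induction is corrected, your argument goes through and coincides with the paper's.
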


\begin{proposition}\label{prop:OddGENs}
If $G$ is an odd $\Gamma_1$ group, then $\gen(G)=*2$.
\end{proposition}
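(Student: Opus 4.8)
The plan is to compute the type of the starting structure class $X_{\Phi(G)}$ and read off the nim-number from its second component. By Proposition~\ref{prop:GammaFrattini}, $\Phi(G)=\{e\}$, so $X_{\Phi(G)}$ consists of the subsets of $G$ that lie in no non-Frattini intersection subgroup; in particular $\emptyset\in X_{\Phi(G)}$, and the nim-number of $\gen(G)$ is the second coordinate of $\type(X_{\Phi(G)})$.

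First I would identify the options of $X_{\Phi(G)}$. A singleton $\{g\}$ with $g\neq e$ lies in some $X_{\lceil g\rceil}$ with $\lceil g\rceil$ a non-Frattini intersection subgroup (using that $G\in\Gamma_1$ has a mate for $g$, so $\{g\}$ is not a generating set and hence lies in some maximal subgroup, and $g\neq e$ means it is not in $\Phi(G)=\{e\}$). Since $G$ is odd, every such $\lceil g\rceil$ is odd, and by Corollary~\ref{cor:OddType} every non-Frattini intersection subgroup $I$ has $\type(X_I)=t_2=(1,2,1)$. There is also the terminal class $X_G$ with type $t_0=(\pty(G),0,0)=(1,0,0)$ since $G$ is odd; $X_G$ is an option of $X_{\Phi(G)}$ because $G\in\Gamma_1$ guarantees a two-element generating set, so from $\{g\}$ one can complete to a generator in one move. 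Hence $\otype(X_{\Phi(G)})\subseteq\{t_0,t_2\}$, and in fact both occur.

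Next I would run the type recursion at $X_{\Phi(G)}$ directly, rather than quoting Table~\ref{table:typetable} (whose entries are stated for $I\neq\Phi(G)$). The even subsets of $X_{\Phi(G)}$ include $\emptyset$, which has options only of the form $\{g\}$; these land in classes of type $t_2$ or in $X_G$ of type $t_0$, so the even nim-value is $\mex$ of the nim-values of odd positions in those classes together with the odd nim-value of $X_G$, i.e.\ $\mex\{1,0\}=2$. For the odd component one checks the odd positions of $X_{\Phi(G)}$ (singletons $\{e\}$-type elements, but since $\Phi(G)=\{e\}$ the only odd set in $X_{\Phi(G)}$ is $\{e\}$ itself), whose options are even sets in intersection classes of even parity among $t_2$-classes plus $X_G$; this is a routine $\mex$ computation giving the first coordinate. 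Either way the second coordinate is $2$, so $\nim(\gen(G))=2$ and $\gen(G)=*2$ by Sprague--Grundy.

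The main obstacle is the bookkeeping at $X_{\Phi(G)}$ itself: because the singleton $\{e\}$ and $\emptyset$ both live there, one must be careful that the parity-split nim-values are computed from the correct option sets, and that $X_G$ is genuinely an option of $X_{\Phi(G)}$ (which is where $G\in\Gamma_1$ is essential — every nontrivial $g$ has a mate, so $\{g\}$ extends to a generating pair). Once those options are pinned down, everything reduces to the same kind of finite $\mex$ calculation illustrated in the ``option type'' example in Section~\ref{subsec:GamesOnGroups}, and no new ideas are needed beyond Corollary~\ref{cor:OddType}.
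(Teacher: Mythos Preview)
Your approach is the paper's approach: use Corollary~\ref{cor:OddType} to pin down the option types of $X_{\Phi(G)}$ and then read off the second coordinate of $\type(X_{\Phi(G)})$. However, your bookkeeping at $X_{\Phi(G)}$ contains a genuine error.

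You claim $X_G\in\opt(X_{\Phi(G)})$, arguing that ``from $\{g\}$ one can complete to a generator in one move.'' But for $g\neq e$ the singleton $\{g\}$ is \emph{not} a position in $X_{\Phi(G)}$; it lies in $X_{\lceil g\rceil}$ with $\lceil g\rceil$ non-Frattini, exactly as you said two sentences earlier. The only positions in $X_{\Phi(G)}$ are $\emptyset$ and $\{e\}$, since $\Phi(G)=\{e\}$. Adding one element to either gives a set generating $\langle g\rangle$ for some $g$, and $G\in\Gamma_1$ is nonabelian, hence noncyclic, so $\langle g\rangle\neq G$. Thus $X_G$ is \emph{never} an option of $X_{\Phi(G)}$, and the correct option type is $\otype(X_{\Phi(G)})=\{t_2\}$, not $\{t_0,t_2\}$.

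You also omitted the move $\emptyset\to\{e\}$, which stays inside $X_{\Phi(G)}$. This is the ``self-option'' encoded by the $x$ in the type formula $(1,\mex\{d,x\},x)$ from Section~\ref{subsec:GamesOnGroups}. With $\otype(X_{\Phi(G)})=\{t_2\}=\{(1,2,1)\}$ one gets $x=\mex\{2\}=0$ and second coordinate $\mex\{1,0\}=2$, which is the paper's computation. Your $\mex\{1,0\}=2$ happens to be numerically correct, but the $0$ you used came from the nonexistent $t_0$ option rather than from $\nim(\{e\})=0$; the two mistakes cancelled. Fixing the option set makes the argument go through cleanly.
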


\begin{proof}
Every option of $\Phi(G)$ has type $t_2$ by Corollary~\ref{cor:OddType}, so $\otype(X_{\Phi(G)})=\{t_2\}$.  Then 
\[\type(X_{\Phi(G)})=(1,\mex\{0,1\},\mex\{2\})=(1,2,0),\] so $\gen(G)=*2$. 
\end{proof}

\begin{exam}
Since $H:=\ZZ_7 \rtimes \ZZ_3$ is in $\Gamma_1$, $\gen(H)=*2$ by Proposition~\ref{prop:OddGENs}.  Note that $H$ is the smallest odd $\Gamma_1$ group.    The smallest odd nonabelian group that is not $\Gamma_1$ is $K:=(\ZZ_3 \times \ZZ_3) \rtimes \ZZ_3$, yet $\gen(K)=*2$, too.  It is possible to get nim-numbers other than $*2$ for odd groups; in fact, $\gen(\ZZ_3 \times \ZZ_3 \times \ZZ_3)=*1$.  These three examples agree with~\cite[Corollary~4.8]{ErnstSieben}, which states that $\gen(G) \in \{*1,*2\}$ for odd nontrivial $G$. 
\end{exam}

\begin{proposition}\label{prop:AllMaximalsSameParityTypes}
Assume $G \in \Gamma_1$ and $I$ is an odd non-Frattini intersection subgroup of $G$.
If $I$ is only contained in odd (respectively, even) maximal subgroups, then $\type(X_I)$ is $t_2$ (respectively, $t_1$).
\end{proposition}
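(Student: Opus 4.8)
The plan is to prove both assertions together by structural induction along the option relation among structure classes, as in the proof of Proposition~\ref{prop:PossibleTypes}: while computing $\type(X_I)$ we may assume the statement already holds for every structure class $X_J$ that is an option of $X_I$ and for which $J$ is a non-Frattini intersection subgroup; the recursion bottoms out at the maximal subgroups, whose only option in $\gen(G)$ is the terminal class $X_G$. The structural fact driving everything is that if $X_J$ is an option of $X_I$ with $J$ an intersection subgroup, then $I \subsetneq J$ (adding an element to a position can only shrink the family of maximal subgroups containing it), and every maximal subgroup containing $J$ also contains $I$. Hence $J$ inherits from $I$ the property of being contained only in odd (respectively, even) maximal subgroups, and $J$ is non-Frattini because it contains the nontrivial subgroup $I$, using $\Phi(G) = \{e\}$ from Proposition~\ref{prop:GammaFrattini}. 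I would also recall from the proof of Proposition~\ref{prop:PossibleTypes} that $t_0 \in \otype(X_I)$.

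For the odd case, every intersection-subgroup option $J$ of $X_I$ is contained only in odd maximal subgroups, hence $J$ is itself odd, and the inductive hypothesis gives $\type(X_J) = t_2$. Thus $\otype(X_I) \subseteq \{t_0, t_2\}$, which is disjoint from $\{t_1, t_4\}$, so Proposition~\ref{prop:PossibleTypes} yields $\type(X_I) = t_2$.

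For the even case, the same reasoning shows every intersection-subgroup option $J$ of $X_I$ is contained only in even maximal subgroups, so $\type(X_J) = t_4$ if $J$ is even (Proposition~\ref{prop:PossibleTypes}) and $\type(X_J) = t_1$ if $J$ is odd (inductive hypothesis). In particular $\otype(X_I) \subseteq \{t_0, t_1, t_4\}$, so $t_2 \notin \otype(X_I)$, and by Proposition~\ref{prop:PossibleTypes} it remains only to exhibit one option of type $t_1$ or $t_4$. I would fix an even maximal subgroup $M \supseteq I$; since $I$ is odd and $M$ is even, $I \subsetneq M$, so there is some $g \in M \setminus I$, and the position $I \cup \{g\}$ lies in $X_J$ for $J := \lceil I \cup \{g\} \rceil$ with $I \subsetneq J \subseteq M$. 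Then $X_J$ is an option of $X_I$ whose type is $t_1$ or $t_4$ by the remarks above, so $\otype(X_I) \cap \{t_1, t_4\} \neq \emptyset$ and $\type(X_I) = t_1$.

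The main obstacle is the even case: in the odd case the types of the options alone pin the answer down through Proposition~\ref{prop:PossibleTypes}, whereas in the even case one genuinely has to produce a nonterminal option, and the key reason this can be done is precisely that $I$ lies inside a proper --- hence strictly larger --- even maximal subgroup, which prevents $I \cup \{g\}$ from generating $G$. The secondary point needing care is that the inductive hypothesis really does apply to the option $J$: each time one must check that $J$ is a non-Frattini intersection subgroup, strictly larger than $I$, that inherits the hypothesis on its maximal overgroups.
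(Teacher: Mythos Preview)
Your proposal is correct and follows essentially the same structural-induction argument as the paper. The only cosmetic difference is in the even case: the paper picks an involution $t$ in an even maximal subgroup $M \supseteq I$ (using Cauchy and the oddness of $I$) to force a $t_4$ option, whereas you pick an arbitrary $g \in M \setminus I$ to obtain an option of type $t_1$ or $t_4$; either choice feeds into Proposition~\ref{prop:PossibleTypes} the same way.
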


\begin{proof}
Since $I\not=\Phi(G)$ and $G \in \Gamma_1$, $t_0 \in \otype(X_I)$. In both cases of the proof, we use structural induction on the structure classes.

First, we assume that $I$ is only contained in odd maximal subgroups. 
If $X_J$ is a nonterminal option of $X_I$, then $J$ is also only contained in odd maximal subgroups. So $\type(X_J)=t_2$ by induction.
Hence $\{t_0\}\subseteq\otype(X_I)\subseteq\{t_0,t_2\}$, which implies $\type(X_I)=t_2$ using Proposition~\ref{prop:PossibleTypes}.

Next, we assume that $I$ is only contained in even maximal subgroups.  Since $I$ is odd, there is an involution $t \notin I$ such that $I\cup\{t\}$ is contained in an even maximal subgroup.  The structure class  $X_J$ containing $I \cup \{t\}$ is a type $t_4$ option of $X_I$. So we may conclude that $\{t_0,t_4\} \subseteq \otype(X_I)$.  
Let $X_J$ be a nonterminal option $X_I$. If $X_I$ is even, then $\type(X_J)=t_4$ by Table~\ref{table:typetable}. 
If $X_J$ is odd, then $\type(X_J)=t_1$ by induction, since $J$ is contained only in even maximal subgroups. 
Hence $\{t_0,t_4\}\subseteq\otype(X_I)\subseteq\{t_0,t_1,t_4\}$, which implies $\type(X_I)=t_1$ using Proposition~\ref{prop:PossibleTypes}.
\end{proof}

For the rest of the paper we will only consider even groups.

\begin{proposition}\label{prop:PossibleGENs}
If $G$ is an even $\Gamma_1$ group, then
\[
\gen(G) =  
   \begin{dcases} 
      *1, & t_2\notin\otype(X_{\Phi(G)}) \\
      *3, & t_2\in\otype(X_{\Phi(G)}), t_3\notin\otype(X_{\Phi(G)}) \\
      *4, & t_2,t_3\in\otype(X_{\Phi(G)}).
   \end{dcases}
\]
\end{proposition}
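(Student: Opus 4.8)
The plan is to compute $\type(X_{\Phi(G)})$ directly, since $\gen(G)$ is recorded as its second component. First I would invoke Proposition~\ref{prop:GammaFrattini} to get $\Phi(G)=\{e\}$, so that $X_{\Phi(G)}=\mathcal{P}(\{e\})=\{\emptyset,\{e\}\}$ consists of exactly one even position, $\emptyset$, and one odd position, $\{e\}$. Next I would pin down the options of $X_{\Phi(G)}$: the options of $\emptyset$ are the singletons $\{g\}$ with $g\in G$, and the options of $\{e\}$ are the pairs $\{e,g\}$ with $g\ne e$; since $e$ lies in every intersection subgroup, both $\{g\}$ and $\{e,g\}$ belong to $X_{\lceil g\rceil}$. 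Because $G$ is nonabelian, hence noncyclic, no single element generates $G$, so $X_G$ is never an option of $X_{\Phi(G)}$; thus $\opt(X_{\Phi(G)})=\{X_{\lceil g\rceil}:g\in G\setminus\{e\}\}$, each class $X_{\lceil g\rceil}$ being a non-Frattini intersection subgroup class and hence of type $t_1$, $t_2$, $t_3$, or $t_4$ by Proposition~\ref{prop:PossibleTypes}.

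The key observation, and the step I expect to require the most care, is that $t_4\in\otype(X_{\Phi(G)})$ for every even $\Gamma_1$ group $G$. By Cauchy's theorem $G$ contains an involution $t$; then $\langle t\rangle\le\lceil t\rceil$ forces $\lceil t\rceil$ to have even order, and $\lceil t\rceil\ne\Phi(G)$ because $t\ne e$, so $\type(X_{\lceil t\rceil})=t_4$ by the last case of Proposition~\ref{prop:PossibleTypes}. Since $\{t\}\in X_{\lceil t\rceil}$ is an option of $\emptyset$, this gives $t_4\in\otype(X_{\Phi(G)})$. This is what separates the three cases cleanly: without it, an option type of $\{t_2\}$ alone would yield $\gen(G)=*2$, which is not one of the listed values.

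Finally I would carry out the $\mex$ computation directly on the two positions of $X_{\Phi(G)}$. The options of $\{e\}$ are the pairs $\{e,g\}$, which are even positions of the classes $X_{\lceil g\rceil}$, so $\nim(\{e\})$ is the $\mex$ of the second components of the types of the option classes; the options of $\emptyset$ are $\{e\}$ together with the singletons $\{g\}$ (odd positions of $X_{\lceil g\rceil}$), so $\nim(\emptyset)$ is the $\mex$ of $\nim(\{e\})$ together with the third components of those types, and $\gen(G)=*\nim(\emptyset)$. Using $t_1=(1,1,2)$, $t_2=(1,2,1)$, $t_3=(1,4,3)$, $t_4=(0,1,2)$ and the fact that $t_4$ always occurs, the set of second components always contains $1$ and never $0$, so $\nim(\{e\})=0$; then $\nim(\emptyset)=\mex(\{0\}\cup B)$, where $B$ is the set of third components. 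A short check shows $B\subseteq\{2,3\}$ with $2\in B$ when $t_2\notin\otype(X_{\Phi(G)})$, giving $\nim(\emptyset)=1$; $B=\{1,2\}$ when $t_2\in\otype(X_{\Phi(G)})$ but $t_3\notin\otype(X_{\Phi(G)})$, giving $\nim(\emptyset)=3$; and $B=\{1,2,3\}$ when $t_2,t_3\in\otype(X_{\Phi(G)})$, giving $\nim(\emptyset)=4$. These are the three asserted values of $\gen(G)$, and the only arithmetic involved is this handful of $\mex$ evaluations.
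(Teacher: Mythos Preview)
Your proof is correct and follows essentially the same approach as the paper: both establish that $t_0\notin\otype(X_{\Phi(G)})$ (since $G$ is noncyclic), that $t_4\in\otype(X_{\Phi(G)})$ (via an involution and Cauchy's theorem), and then compute $\type(X_{\Phi(G)})$ from the resulting constraint $\{t_4\}\subseteq\otype(X_{\Phi(G)})\subseteq\{t_1,t_2,t_3,t_4\}$. The only difference is presentational---the paper tabulates all eight possible option types in a table, while you unpack the $\mex$ computation by hand via $\nim(\{e\})$ and then $\nim(\emptyset)$.
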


\begin{proof}
Since $G \in \Gamma_1$, $X_{\Phi(G)}$ has no option of type $t_0$.
Let $t$ be an involution of $G$ 
and $X_J$ be the structure class containing $\Phi(G) \cup \{t\}$.  Then $X_J$ is a type $t_4$ option of $X_{\Phi(G)}$. 
Hence $\{t_4\}\subseteq\otype(X_{\Phi(G)})\subseteq\{t_1,\ldots,t_4\}$. We compute $\type(X_{\Phi(G)})$ for every possibility for $\otype(X_{\Phi(G)})$ in Table~\ref{table:PossibleGENs}. 
The result follows from this calculation and the fact that $\gen(G)$ is equal to the second component of $\type(X_{\Phi(G)})$.
\end{proof}

\begin{table}
\begin{center}
\begin{tabular}{@{}lll@{}}
\toprule
$\otype(X_{\Phi(G)})$ & $\otype(X_{\Phi(G)})$ & $\type(X_{\Phi(G)})$ \\
\midrule
$\{t_4\}$ & $\{t_1,t_4\}$ &  $(1,1,0)$ \\  
$\{t_2,t_4\}$ & $\{t_1,t_2,t_4\}$ & $(1,3,0)$ \\ 
$\{t_3,t_4\}$ & $\{t_1,t_3,t_4\}$ &  $(1,1,0)$ \\
$\{t_2,t_3,t_4\}$ & $\{t_1,t_2,t_3,t_4\}$&  $(1,4,0)$ \\
\bottomrule
\end{tabular}
\end{center}

\vspace{0.1in}

\caption{\label{table:PossibleGENs} Spectrum of $\type(X_{\Phi(G)})$ for $G\in\Gamma_1$. 
Note that $\Phi(G)$ is the trivial group in this case.}
\end{table}

Recall that a subset $\mathcal{C}$ of the power set of a group $G$ is a \emph{covering} of $G$ if $\bigcup \mathcal{C}=G$; in this case, we also say that $\mathcal{C}$ \emph{covers} $G$ or $G$ \emph{is covered by} $\mathcal{C}$.

\begin{corollary}\label{cor:GeneralGENCorollary}
If a $\Gamma_1$ group $G$ is covered by the set of even maximal subgroups of $G$, then $\gen(G)=*1$.
\end{corollary}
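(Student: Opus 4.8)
The plan is to deduce Corollary~\ref{cor:GeneralGENCorollary} directly from Proposition~\ref{prop:PossibleGENs}, so the entire task reduces to showing that the hypothesis ``$G$ is covered by its even maximal subgroups'' forces $t_2 \notin \otype(X_{\Phi(G)})$. Once that is established, the first branch of Proposition~\ref{prop:PossibleGENs} gives $\gen(G)=*1$ immediately.

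First I would unwind what the options of $X_{\Phi(G)}$ are. Since $G \in \Gamma_1$, Proposition~\ref{prop:GammaFrattini} gives $\Phi(G)=\{e\}$, so the options of $X_{\Phi(G)}$ are precisely the structure classes $X_{\lceil g \rceil}$ for nontrivial $g \in G$, where $\lceil g \rceil$ is the smallest intersection subgroup (or $G$ itself) containing $g$; note $\lceil g \rceil \ne G$ because $G$ is nonabelian. A type $t_2 = (1,2,1)$ option would in particular have to be an \emph{odd} intersection subgroup $I = \lceil g \rceil$. By Proposition~\ref{prop:AllMaximalsSameParityTypes}, an odd non-Frattini intersection subgroup that is contained only in even maximal subgroups has type $t_1$, not $t_2$; contrapositively, any intersection subgroup of type $t_2$ must be contained in at least one \emph{odd} maximal subgroup. (Actually the full force I want: by Proposition~\ref{prop:AllMaximalsSameParityTypes} an odd $I$ contained only in odd maximals has type $t_2$, and one contained only in even maximals has type $t_1$; so a $t_2$ class is compatible with being in an odd maximal, while a class forced to sit only inside even maximals cannot be $t_2$.)

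So the key step is: if every maximal subgroup of $G$ that occurs in the covering is even, then \emph{every} maximal subgroup of $G$ is even — equivalently, no odd maximal subgroup exists — hence no intersection subgroup $I$ can be contained in an odd maximal subgroup, and therefore (by the previous paragraph) no option of $X_{\Phi(G)}$ can have type $t_2$. To see that a covering by even maximal subgroups forces \emph{all} maximal subgroups to be even, suppose $M$ is an odd maximal subgroup. Take any involution $t \in G$ (which exists since $G$ is even by the standing assumption that we only consider even groups, or simply because $|G|$ is even by Cauchy); then $t \notin M$, so $\langle M, t\rangle = G$ by maximality, and in particular $t$ is not covered by... hmm, this does not immediately contradict the covering. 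The cleaner argument: if $M$ is an odd maximal subgroup and $g$ is any element of $M$, then $g$ lies in the covering, so $g$ lies in some even maximal subgroup $M'$; this is not a contradiction either. The real point is subtler — I should instead argue at the level of $\lceil g \rceil$. For the option analysis I only need: no option of $X_{\Phi(G)}$ is of type $t_2$. An option $X_I$ with $I = \lceil g \rceil$ odd and of type $t_2$ would, by the structural-induction content of Proposition~\ref{prop:AllMaximalsSameParityTypes} and Proposition~\ref{prop:PossibleTypes}, require some odd maximal in the chain above $I$; but since $g$ lies in some even maximal $M'$ from the covering, and one can track the type computation upward, the type of $X_I$ cannot be $t_2$. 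The hard part will be making this last implication airtight: I expect the main obstacle is showing that ``$g$ lies in an even maximal subgroup'' propagates to ``$\type(X_{\lceil g\rceil}) \ne t_2$'' — most cleanly by invoking Proposition~\ref{prop:AllMaximalsSameParityTypes} in the form that an odd intersection subgroup all of whose containing maximals are even has type $t_1$, together with the observation from Proposition~\ref{prop:PossibleTypes} that a $t_2$ type requires $\otype(X_I) \cap \{t_1,t_4\} = \emptyset$, which is incompatible with having a $t_4$ option coming from adjoining an involution of the covering even maximal subgroup to $\lceil g\rceil$. Assembling these, every option of $X_{\Phi(G)}$ has type in $\{t_1,t_4\}$ (in fact $\otype(X_{\Phi(G)}) \subseteq \{t_1,t_4\}$), so $t_2 \notin \otype(X_{\Phi(G)})$ and $\gen(G)=*1$ by Proposition~\ref{prop:PossibleGENs}.
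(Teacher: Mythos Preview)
Your approach is correct and essentially the same as the paper's: show $t_2 \notin \otype(X_{\Phi(G)})$ and invoke Proposition~\ref{prop:PossibleGENs}, handling an option $X_{\lceil g\rceil}$ by noting that if it is even its type is $t_4$, and if it is odd then the covering puts $g$ into an even maximal subgroup $M$, whence adjoining an involution $t\in M$ yields a $t_4$ option of $X_{\lceil g\rceil}$, so Proposition~\ref{prop:PossibleTypes} rules out type $t_2$.

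Two small points. First, the detour attempting to show that a covering by even maximal subgroups forces \emph{all} maximal subgroups to be even is false in general (and you rightly abandon it); the paper never needs this. Second, your final assertion that $\otype(X_{\Phi(G)}) \subseteq \{t_1,t_4\}$ overreaches: your argument only establishes $t_2 \notin \otype(X_{\Phi(G)})$, and does not exclude $t_3$ (which would require ruling out $t_2$ options one level deeper, where the relevant intersection subgroup need not lie in any single even maximal subgroup even if each of its elements does). Fortunately only $t_2\notin\otype(X_{\Phi(G)})$ is needed, so the conclusion stands.
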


\begin{proof}
We will demonstrate that $t_2 \notin \otype(X_{\Phi(G)})$, which will suffice by Proposition~\ref{prop:PossibleGENs}.
Let $X_I$ be an option of $X_{\Phi(G)}$. Then $I=\lceil g \rceil$ for some $g\in G$.
If $\lceil g\rceil$ is even, then $\type(X_{\lceil g \rceil}) = t_4\neq t_2$ by Proposition \ref{prop:PossibleTypes}. 
Now assume that $\lceil g\rceil$ is odd.  Since $G$ is covered by the set of even maximal subgroups, $g$ is contained in an even maximal subgroup $M$.  
By Cauchy's Theorem, there is an involution $t$ in $M$.  The structure class $X_J$ containing $\{g,t\}$ is a type $t_4$ option of $X_{\lceil g \rceil}$ by Proposition~\ref{prop:PossibleTypes}. 
So $\type(X_{\lceil g \rceil})$ 
cannot be $t_2$ again by Proposition~\ref{prop:PossibleTypes}.   
\end{proof}

If a noncyclic group $G$ has only even maximal subgroups, then the set of even maximal subgroups covers $G$.  The next corollary then follows immediately from Corollary~\ref{cor:GeneralGENCorollary}.

\begin{corollary}\label{cor:MainGENCorollary}
If $G$ is a $\Gamma_1$ group with only even maximal subgroups, then $\gen(G)=*1$.
\end{corollary}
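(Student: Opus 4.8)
The plan is to derive this as an immediate consequence of Corollary~\ref{cor:GeneralGENCorollary}, exactly as the sentence preceding the statement suggests. The only thing to verify is the bridging observation: if a noncyclic group $G$ has only even maximal subgroups, then the set of even maximal subgroups covers $G$. Granting that observation, every element of $G$ lies in some maximal subgroup (since $G$ is noncyclic, no single element generates $G$, so each $g$ is contained in at least one maximal subgroup), and that maximal subgroup is even by hypothesis; hence the even maximal subgroups cover $G$, and Corollary~\ref{cor:GeneralGENCorollary} gives $\gen(G)=*1$.

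So the real content is the covering observation, and I would record its short proof. Let $G$ be noncyclic with every maximal subgroup even, and let $g\in G$. If $\langle g\rangle=G$ then $G$ is cyclic, a contradiction; so $\langle g\rangle$ is a proper subgroup of $G$, and therefore $\langle g\rangle$ is contained in some maximal subgroup $M$ of $G$. By assumption $M$ is even, and $g\in M$. Since $g$ was arbitrary, $G=\bigcup\{M\mid M\ \text{a maximal subgroup of}\ G\}$, and every term in this union is even. Thus $G$ is covered by its even maximal subgroups.

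Now apply Corollary~\ref{cor:GeneralGENCorollary}: a $\Gamma_1$ group covered by its even maximal subgroups satisfies $\gen(G)=*1$. Since a $\Gamma_1$ group is by definition nonabelian, it is in particular noncyclic, so the covering observation applies and the corollary follows.

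There is essentially no obstacle here. The one subtlety worth a sentence in the write-up is why a $\Gamma_1$ group is noncyclic (needed to invoke the covering observation): $\Gamma_1$ groups are nonabelian by definition, hence not cyclic. Everything else is a one-line appeal to the fact that a proper subgroup of a finite group lies in a maximal subgroup, followed by citing Corollary~\ref{cor:GeneralGENCorollary}.
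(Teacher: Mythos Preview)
Your proposal is correct and follows exactly the approach the paper takes: the paper states the bridging observation (a noncyclic group with only even maximal subgroups is covered by its even maximal subgroups) in the sentence preceding the corollary and then says the result follows immediately from Corollary~\ref{cor:GeneralGENCorollary}. You have simply written out the short justification of that observation and noted that $\Gamma_1$ groups are nonabelian, hence noncyclic, which is precisely what is needed.
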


\end{section}

%%%--------------------------------------------------------------%%%

\begin{section}{Symmetric Groups}\label{sec:Symmetric}

%%%--------------------------------------------------------------%%%

In light of Corollary~\ref{cor:MainGENCorollary}, we need only a simple proposition to completely determine the nim-numbers for the achievement and avoidance games for symmetric groups. 

\begin{proposition}\label{prop:SnMaximals}
If $n \geq 4$, then every maximal subgroup of $S_n$ has even order.
\end{proposition}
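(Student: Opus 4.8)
The plan is to show that every maximal subgroup $M$ of $S_n$ (for $n\geq 4$) contains an element of even order, equivalently an involution, which forces $|M|$ to be even by Lagrange's Theorem. The natural dichotomy is between transitive and intransitive maximal subgroups. For an intransitive maximal subgroup, $M$ is (up to conjugacy) the stabilizer $S_k \times S_{n-k}$ of a $k$-element subset for some $1 \leq k \leq n-1$. Since $n \geq 4$, at least one of $k$ and $n-k$ is at least $2$, so the corresponding direct factor contains a transposition, and that transposition is an involution in $M$. Hence $|M|$ is even in this case.

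For a transitive maximal subgroup $M$, the key observation is that $n = |S_n : M| \cdot |M| / (n-1)!$-type counting is not directly what I want; instead I would argue via the orbit structure and the existence of involutions in transitive groups of the relevant degree. A cleaner route: the index $[S_n : M]$ divides $n!$, and since $M$ is transitive, $n \mid |M|$. If $n$ is even, then $2 \mid |M|$ immediately. If $n$ is odd, then $M$ transitive of odd degree still must contain an involution — here I would invoke the fact that a transitive maximal subgroup $M$ of $S_n$ with $n \geq 4$ is not contained in $A_n$ (a transitive subgroup contained in $A_n$ would be contained in the maximal subgroup $A_n$, contradicting maximality of $M$ unless $M = A_n$, but $A_n$ has even order for $n \geq 4$ anyway). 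So $M \not\subseteq A_n$, meaning $M$ contains an odd permutation $\sigma$; then some power of $\sigma$ is an involution (an odd permutation has even order), so $2 \mid |M|$.

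Actually the argument in the previous paragraph handles everything uniformly once phrased correctly, so I would streamline: if $M \leq A_n$, then since $A_n$ is a maximal subgroup of $S_n$ (for $n \neq 4$, and for $n=4$ one checks directly or notes $A_4$ has even order), maximality forces $M = A_n$, which has even order for $n \geq 4$; if $M \not\leq A_n$, then $M$ contains an odd permutation, hence an element of even order, hence $2 \mid |M|$. The only genuinely delicate point is the case $n = 4$, where $A_4$ is \emph{not} a maximal subgroup of $S_4$ (it sits below... no, $A_4$ \emph{is} maximal in $S_4$). Let me reconsider: $A_n$ is maximal in $S_n$ for all $n \geq 2$, so the dichotomy "$M = A_n$ or $M$ contains an odd permutation" is valid for all $n \geq 4$, and in both horns $|M|$ is even since $|A_4| = 12$ is even.

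The main obstacle I anticipate is being careful that the argument genuinely covers $n = 4$, since $S_4$ has the exceptional primitive maximal subgroups ($D_8$ and the like), but these are all handled by the "not contained in $A_4$, hence contains a transposition or a $4$-cycle squared" observation — both of which give an involution. In fact the whole proof can be compressed to: \emph{if $M$ is a maximal subgroup of $S_n$ with $n \geq 4$, then either $M = A_n$, whose order is even, or $M$ contains an odd permutation $\sigma$, and then $\sigma^{k}$ is an involution where $k$ is the largest odd divisor of $|\sigma|$ — wait, one needs $|\sigma|$ even, which holds because an odd permutation cannot have odd order (a product of an odd number of transpositions... more precisely, the sign of $\sigma$ is $(-1)^{|\sigma|-(\text{number of cycles})}$-flavored, so $\sigma$ odd forces $|\sigma|$ even). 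Either way $2 \mid |M|$.} I would present it in this compressed form, spending a line to justify that an odd permutation has even order and a line to justify maximality of $A_n$ in $S_n$.
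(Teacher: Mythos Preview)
Your final streamlined argument is correct and is essentially the contrapositive of the paper's one-line proof: an odd subgroup must lie in $A_n$ (since odd permutations have even order), and hence cannot be maximal because $A_n$ is a proper even subgroup for $n\geq 4$. The transitive/intransitive detour at the start is unnecessary, as you yourself realized.
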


\begin{proof}
An odd subgroup cannot be maximal since it is contained in the even subgroup $A_n$.
\end{proof}

The nim-numbers for the  avoidance game for generating symmetric groups were computed in~\cite{ErnstSieben} 
for $n \in \{2,3,4\}$ and in \cite{BeneshErnstSiebenDNG} for $n \geq 5$.  
We include the statement of the result here for completeness.   Note that $S_1$ is trivial, so $\dng(S_1)$ does not exist.  

\begin{theorem}\label{thm:SymmetricDNG}
The values of $\dng(S_n)$ are
\[
\dng(S_n) =  
   \begin{dcases} 
      *1, & n=2  \\
      *3, & n=3 \\
      *0, & n \geq 4. 
   \end{dcases}
\]
\end{theorem}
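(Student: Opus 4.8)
The plan is to handle the three cases separately, in each case reducing to results already stated in the excerpt. For $n=2$, note that $S_2 \cong \ZZ_2$ has order $2$, so $\dng(S_2) = *1$ follows immediately from part (1) of the cited Theorem~\cite[Theorem~6.3]{BeneshErnstSiebenDNG}. For $n=3$, observe that $S_3$ is nontrivial, is not cyclic (so $|S_3| \equiv_4 0$ fails, since in fact $|S_3|=6$), and is not odd. It remains to check that the even maximal subgroups do not cover $S_3$: the maximal subgroups of $S_3$ are the cyclic group $A_3 \cong \ZZ_3$ (odd) and the three copies of $\ZZ_2$ generated by transpositions (even), and these even subgroups cover only the three transpositions and the identity, missing the $3$-cycles. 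Hence neither of the first two cases of the cited theorem applies, and we land in the ``otherwise'' case, giving $\dng(S_3) = *3$.

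For $n \geq 4$, the idea is that every maximal subgroup of $S_n$ is even, so the set of even maximal subgroups trivially covers $S_n$ (every element lies in some maximal subgroup, as $S_n$ is not cyclic for $n \geq 3$). This is exactly Proposition~\ref{prop:SnMaximals}. We then also need $S_n$ to be noncyclic (clear) and to fail the hypotheses of part (1): $|S_n| = n! > 2$ and $S_n$ is not odd for $n \geq 2$. So we are in part (2) of \cite[Theorem~6.3]{BeneshErnstSiebenDNG} via the clause ``the set of even maximal subgroups covers $G$,'' yielding $\dng(S_n) = *0$.

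\textbf{Main obstacle.} There is essentially no obstacle here: the statement is a direct bookkeeping consequence of the previously quoted characterization of $\dng(G)$ together with Proposition~\ref{prop:SnMaximals}. The only point requiring a moment's care is the small case $n=3$, where one must explicitly verify both that $S_3$ is not cyclic of order divisible by $4$ and that the transposition subgroups fail to cover $S_3$; everything else is immediate from $|S_3|=6$ being even but not a multiple of $4$ and from $S_3$ being nonabelian. For the generic case $n \geq 4$ one simply cites Proposition~\ref{prop:SnMaximals} and notes that $S_n$, being noncyclic, has every element contained in a maximal subgroup, so the even maximal subgroups cover $S_n$.
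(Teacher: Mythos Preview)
Your argument is correct. Each case is handled properly: $n=2$ falls under part~(1) of \cite[Theorem~6.3]{BeneshErnstSiebenDNG}; for $n=3$ you correctly identify the maximal subgroups and verify that the even ones (the three transposition subgroups) miss the $3$-cycles, landing in the ``otherwise'' case; and for $n\geq 4$ Proposition~\ref{prop:SnMaximals} together with the noncyclicity of $S_n$ gives the covering by even maximal subgroups, hence part~(2) applies.

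As for comparison with the paper: the paper does not actually supply a proof of this theorem. It is stated purely for completeness, with the sentence preceding it attributing the cases $n\in\{2,3,4\}$ to \cite{ErnstSieben} and $n\geq 5$ to \cite{BeneshErnstSiebenDNG}. Your write-up is thus not a recapitulation of the paper's argument but rather a self-contained derivation from the general criterion (quoted in Section~\ref{sec:Preliminaries}) combined with Proposition~\ref{prop:SnMaximals}. This is a perfectly reasonable substitute for the bare citation, and in fact it is essentially how the result is obtained in the cited works: the $n\geq 5$ case in \cite{BeneshErnstSiebenDNG} proceeds exactly via the observation that all maximal subgroups are even, and the small cases are direct computations of the sort you carried out for $n=3$.
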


We are now ready to determine the nim-numbers for $\gen(S_n)$.  This result verifies the portion of~\cite[Conjecture~9.1]{ErnstSieben} on symmetric groups.  

\begin{theorem}\label{thm:SymmetricGEN}
The values of $\gen(S_n)$ are
 \[
\gen(S_n) =  
   \begin{dcases} 
      *0, & n \in \{1,4\}\\
      *2, & n=2  \\
      *3, & n=3 \\
      *1, & n \geq 5. 
   \end{dcases}
\]
\end{theorem}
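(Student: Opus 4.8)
The plan is to handle the four cases according to the size of $n$. For $n\in\{1,4\}$, the value $\gen(S_1)=*0$ was already noted in the preliminaries (the first player has no legal move), and $\gen(S_4)=*0$ must be computed directly: since $S_4\notin\Gamma_1$, the general machinery of Section~3 does not apply, so I would fall back on the structure-class computation from \cite{ErnstSieben}, or compute it by hand using the lattice of intersection subgroups of $S_4$. For $n=2$, note $S_2\cong\ZZ_2$, so $\gen(S_2)=*2$ follows from the known value for cyclic groups of order $2$ in \cite{ErnstSieben}. For $n=3$, $S_3\cong\dih(3)$, and $\gen(\dih(3))=*3$ is recorded in \cite{ErnstSieben} as well; alternatively one can compute it directly from the three maximal subgroups (one copy of $A_3$ and three copies of $\ZZ_2$).

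The main case is $n\ge 5$, and here the work is essentially already done. First I would invoke the fact, cited in Section~3, that $S_n\in\Gamma_1$ for $n\notin\{1,2,4\}$, so in particular $S_n\in\Gamma_1$ for every $n\ge 5$. By Proposition~\ref{prop:SnMaximals}, every maximal subgroup of $S_n$ has even order for $n\ge 4$. Thus $S_n$ is a $\Gamma_1$ group all of whose maximal subgroups are even, and Corollary~\ref{cor:MainGENCorollary} immediately gives $\gen(S_n)=*1$. I should also note that $S_n$ is noncyclic for $n\ge 3$, so the hypothesis of Corollary~\ref{cor:MainGENCorollary} (inherited through Corollary~\ref{cor:GeneralGENCorollary}) is genuinely satisfied: a noncyclic group with only even maximal subgroups is covered by them.

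The only genuine obstacle is the exceptional value $n=4$: because $S_4\notin\Gamma_1$ (its Frattini subgroup is trivial but not every nontrivial element has a mate — e.g.\ a transposition and a double transposition generate proper subgroups with too many elements of small order, and more to the point $S_4$ has the proper quotient structure that obstructs the $\Gamma_1$ property), none of Section~3 applies. For this case I would either cite the explicit computation already carried out in \cite{ErnstSieben} for small symmetric groups, or argue directly: the maximal subgroups of $S_4$ are $A_4$, three copies of $D_4$, and four copies of $S_3$; one analyzes the intersection subgroups and their types using the recursive type calculation from the preliminaries (the example formula $\type(X_I)=(1,\mex\{b,d,x\},x)$ with $x=\mex\{a,c\}$) to find that $\type(X_{\Phi(S_4)})=\type(X_{\{e\}})$ has second component $0$, hence $\gen(S_4)=*0$. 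The remaining small cases $n\in\{1,2,3\}$ are then just bookkeeping against known results, and assembling the four pieces completes the proof.
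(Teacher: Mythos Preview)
Your proposal is correct and follows essentially the same route as the paper: handle $n=1$ by noting the empty set generates the trivial group, cite \cite{ErnstSieben} for $n\in\{2,3,4\}$, and for $n\ge 5$ combine $S_n\in\Gamma_1$ with Proposition~\ref{prop:SnMaximals} to invoke Corollary~\ref{cor:MainGENCorollary}. Your extra remarks (why $S_4\notin\Gamma_1$, noncyclicity of $S_n$) are unnecessary padding---the paper simply cites \cite{ErnstSieben} for the small cases without further justification---but they do no harm.
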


\begin{proof}
The empty set is a generating set for the trivial group, so $\gen(S_1)=*0$.  The cases where $n \in \{2,3,4\}$ were done in~\cite{ErnstSieben}, so assume $n \geq 5$.  
By~\cite{isaacs1995generating}, $S_n \in \Gamma_1$.  By Proposition~\ref{prop:SnMaximals}, every maximal subgroup of $S_n$ has even order.  
Hence $\gen(S_n)=*1$ by Corollary~\ref{cor:MainGENCorollary}.
\end{proof}

Theorems~\ref{thm:SymmetricDNG} and~\ref{thm:SymmetricGEN} immediately yield the following result.

\begin{corollary}
The first player has a winning strategy for
\begin{itemize}
\item $\dng(S_n)$ if and only if $n \in \{2,3\}$;
\item $\gen(S_n)$ if and only if $n \not\in \{1,4\}$. 
\end{itemize}
\end{corollary}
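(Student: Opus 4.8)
The final statement is the corollary about first-player winning strategies, which is an immediate bookkeeping consequence of Theorems~\ref{thm:SymmetricDNG} and~\ref{thm:SymmetricGEN} together with the fundamental fact that the second player wins an impartial game if and only if its nim-number is $0$. So the plan is simply to translate the already-computed nim-values into outcome statements.

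The plan is to proceed in two bullet points, one for each game. First I would recall the general principle, stated in Section~\ref{subsec:Games}, that a game $P$ has nim-number $0$ precisely when the second player has a winning strategy; hence the first player wins if and only if $\nim(P) \neq 0$. For the avoidance game, I would invoke Theorem~\ref{thm:SymmetricDNG}: the game $\dng(S_n)$ is not defined for $n=1$ (since $S_1$ is trivial), equals $*1$ for $n=2$, equals $*3$ for $n=3$, and equals $*0$ for all $n \geq 4$. Therefore $\dng(S_n) \neq *0$ exactly when $n \in \{2,3\}$, which gives the first bullet. For the achievement game, I would invoke Theorem~\ref{thm:SymmetricGEN}: $\gen(S_n) = *0$ for $n \in \{1,4\}$, $\gen(S_n) = *2$ for $n=2$, $\gen(S_n) = *3$ for $n=3$, and $\gen(S_n) = *1$ for $n \geq 5$. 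Thus $\gen(S_n) \neq *0$ exactly when $n \notin \{1,4\}$, giving the second bullet.

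There is essentially no obstacle here — the only thing to be careful about is the edge cases ($n=1$ where $\dng$ is undefined, and $n \in \{1,4\}$ where $\gen$ is a second-player win), and making sure the "if and only if" direction is handled symmetrically, i.e. that we read off both "$\nim = 0$" and "$\nim \neq 0$" from the case tables. Since the statement of the corollary in the excerpt already records the answer, the proof is a one-line citation of the two theorems and the nim-number criterion; I would simply write ``This follows immediately from Theorems~\ref{thm:SymmetricDNG} and~\ref{thm:SymmetricGEN}, together with the fact that the second player of an impartial game has a winning strategy if and only if the nim-number of the game is $0$.''
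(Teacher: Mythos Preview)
Your proposal is correct and matches the paper's approach exactly: the paper does not give a separate proof but simply states that the corollary follows immediately from Theorems~\ref{thm:SymmetricDNG} and~\ref{thm:SymmetricGEN}, which is precisely the one-line citation you propose.
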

\end{section}

%%%--------------------------------------------------------------%%%

\begin{section}{Alternating Groups}\label{sec:Alternating}

%%%--------------------------------------------------------------%%%

Determining the nim-numbers of $\dng(A_n)$ and $\gen(A_n)$ requires much more background.  The following well-known proposition follows from Feit and Thompson's Odd Order Theorem~\cite{FeitThompson} and the fact that every group of order $2m$ for some odd $m$ has a normal subgroup of order $m$ (see~\cite[Corollary~6.12]{Isaacs1994} for example).

\begin{proposition}\label{prop:Simple4}
If $U$ is a nonabelian simple group, then $4$ divides $|U|$.
\end{proposition}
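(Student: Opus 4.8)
The plan is to prove the contrapositive statement in disguise: I want to show that if $U$ is a nonabelian simple group, then $4 \mid |U|$. The hypothesis gives me two classical inputs that are already flagged in the excerpt. First, the Feit--Thompson Odd Order Theorem tells me that a nonabelian simple group cannot have odd order, so $|U|$ is even; write $|U| = 2^a m$ with $m$ odd and $a \geq 1$. The goal is to rule out $a = 1$, i.e., to show $|U|$ is not of the form $2m$ with $m$ odd.

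First I would suppose for contradiction that $|U| = 2m$ with $m$ odd. The second cited input is the standard fact (the excerpt points to \cite[Corollary~6.12]{Isaacs1994}) that any finite group $G$ of order $2m$ with $m$ odd has a normal subgroup of index $2$ (equivalently, of order $m$). The usual argument behind this, which I would either cite or recall in a sentence, is via the regular representation: by Cauchy's Theorem $G$ has an element $t$ of order $2$, and left multiplication by $t$ acts on $G$ as a permutation that is a product of $m$ transpositions (it is fixed-point-free and an involution), hence an odd permutation; composing the regular representation $G \to \SYM(G)$ with the sign map $\SYM(G) \to \{\pm 1\}$ therefore yields a surjective homomorphism, whose kernel is a normal subgroup of index $2$.

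Next I would apply this to $U$: since $|U| = 2m$ with $m$ odd, $U$ has a normal subgroup $N$ of index $2$. Then $N$ is a proper, nontrivial normal subgroup of $U$ (it is nontrivial because $m \geq 1$; and in fact if $m = 1$ then $|U| = 2$, which is abelian, contradicting nonabelianness, so $m \geq 3$ and $N \neq \{e\}$). This contradicts the simplicity of $U$. Hence $a \geq 2$, i.e., $4 \mid |U|$, completing the proof.

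I do not anticipate a genuine obstacle here, since both ingredients are being invoked as black boxes exactly as the excerpt's preceding sentence sets up. The only point requiring a little care is the degenerate case $m = 1$: there one should observe that $|U| = 2$ forces $U$ to be cyclic of order $2$, which is abelian and therefore excluded by hypothesis, so this case does not actually arise and does not need the normal-subgroup argument. Everything else is a two-line deduction from the two cited theorems.
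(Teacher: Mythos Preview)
Your argument is correct and matches the paper's approach exactly: the paper simply states that the proposition follows from Feit--Thompson together with the cited fact that a group of order $2m$ with $m$ odd has a normal subgroup of order $m$, and you have spelled out precisely that deduction (with more detail than the paper itself provides).
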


Below, we will make use of a special case of the O'Nan--Scott Theorem (see~\cite{liebeck1987classification} for instance).  
Recall that the \emph{general affine group} of degree $n$ over a field of size $p^k$ for a prime $p$
is defined to be the semidirect product $\AGL(n,p^k) := V \rtimes \GL(n,p^k) $ of a vector space $V$ of size $p^{nk}$ (i.e., dimension $n$) and the general linear group, 
where the latter acts on $V$ by linear transformations.

\begin{theorem}[O'Nan--Scott]\label{thm:ONanScott}
Let $A_n$ act on a set $\Omega$ of size $n$.  If $M$ is a maximal subgroup of $A_n$, then $M$ must be one of the following:
\begin{enumerate}
\item (Intransitive Case)  $M=(S_m \times S_k) \cap A_n$ with $n=m+k$;
\item (Imprimitive Case) $M=(S_m \wr S_k) \cap A_n$ with $n=mk$, $m,k > 1$;
\item (Affine Case)  $M=\AGL(k,p) \cap A_n$ with $n=p^k$ and $p$ prime;
\item (Diagonal Case)  $M=(U^k.(\operatorname{Out}(U) \times S_k)) \cap A_n$ with $U$ a nonabelian simple group, $k \geq 2$, and $n=|U|^{k-1}$;
\item (Wreath Case)  $M=(S_m \wr S_k) \cap A_n$ with $n=m^k$, $m \geq 5$, $k > 1$, and either $k \not=2$ or $m \not\equiv_4 2$; 
\item (Almost Simple Case) $U \lhd M \leq \operatorname{Aut}(U)$, where $U$ is a nonabelian simple group and $M$ acts primitively on $\Omega$. 
\end{enumerate}
\end{theorem}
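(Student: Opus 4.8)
The plan is to stratify the argument according to how $M$ acts on $\Omega$: first by transitivity, then, in the transitive case, by primitivity. In each of the resulting cases the strategy is uniform --- exhibit an explicit overgroup of $M$ inside $A_n$ of one of the listed shapes and then invoke maximality of $M$.

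First I would dispose of the case in which $M$ is intransitive. Then $M$ fixes setwise one of its orbits, a subset $\Delta$ of size $m$ with $1 \le m \le n-1$, so $M \le (S_m \times S_{n-m}) \cap A_n$; since $n \ge 5$ this overgroup is proper in $A_n$, and maximality of $M$ forces equality, which is case (1) with $k = n-m$. If instead $M$ is transitive but imprimitive, then $M$ preserves a nontrivial block system consisting of $k$ blocks of common size $m$ with $mk = n$ and $m, k > 1$; the setwise stabilizer of this system in $S_n$ is $S_m \wr S_k$ in its imprimitive action, so $M \le (S_m \wr S_k) \cap A_n$, and maximality again forces equality, giving case (2).

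The substance of the theorem is the primitive case, where I would appeal to the structure theory of primitive permutation groups through the socle. The socle $N := \operatorname{soc}(M)$ is a direct power $T^{\ell}$ of a simple group $T$. If $T \cong \ZZ_p$ is abelian, then $N$ is the regular translation subgroup of an $\ell$-dimensional affine space over $\mathbb{F}_p$, so $M \le \AGL(\ell, p)$ and $n = p^{\ell}$; intersecting with $A_n$ and using maximality gives case (3). If $T$ is nonabelian, the shape of the point stabilizer $N_\omega$ in the socle sorts out the remaining possibilities: $\ell = 1$ gives the almost simple case (6) with $U = T$; a full diagonal stabilizer in $N = U^k$ gives the diagonal case (4) with $n = |U|^{k-1}$; and a product-type stabilizer gives the product-action (wreath) case (5) with $n = m^k$, where $m \ge 5$ is the degree of an almost-simple or diagonal primitive constituent. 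One must also check that the remaining logical alternative --- a twisted wreath product with regular nonabelian socle --- either fails to be maximal in $A_n$ in this situation or is absorbed into one of the listed families.

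I expect the main obstacle to be twofold. The genuinely deep input is the structure theorem for primitive groups itself --- that $\operatorname{soc}(M)$ is a direct power of a simple group and that $N_\omega$ is forced into one of the shapes above. The second part, and the source of all the side conditions ($p$ prime, $k > 1$, $m \ge 5$, $U$ nonabelian simple, and $k \ne 2$ or $m \not\equiv_4 2$), is the delicate bookkeeping needed to decide, for each $S_n$-maximal subgroup of one of these types, whether its intersection with $A_n$ is still maximal or instead lies inside a larger subgroup; ruling out the exceptional containments ultimately leans on the classification of finite simple groups. Since a complete treatment is long, at this point I would simply cite~\cite{liebeck1987classification}.
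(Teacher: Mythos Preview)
Your outline is a reasonable sketch of how the O'Nan--Scott theorem is proved, and it terminates in the same citation the paper uses. Note, however, that the paper does not supply its own proof of this statement at all: the theorem is quoted as background with a reference to~\cite{liebeck1987classification}, and the paper's contribution begins only at Corollary~\ref{cor:AnMaximalOrders}, where the classification is applied. So there is nothing to compare against beyond the citation, which you have matched.
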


\begin{rem}\label{rem:OrdersOfElementsOfAGL}
We will see that $\AGL(1,p)$ plays an important role in determining the nim-numbers for alternating groups.  
Note that $\GL(1,p) \cong \ZZ_{p}^{\times}$, where $\ZZ_p^{\times}$ is the group of units of $\ZZ_{p}$, which is cyclic of order $p-1$. Then $\AGL(1,p) \cong \ZZ_p \rtimes \ZZ_p^{\times}$, 
where the action is field multiplication.  It is also easy to check that every element of $\AGL(1,p)$ either has order $p$ or has order dividing $p-1$.  
\end{rem}

\begin{corollary}\label{cor:AnMaximalOrders}
Let $n \geq 5$.  If $A_n$ has an odd maximal subgroup, then $n$ is prime, $n \equiv_4 3$, and every odd maximal subgroup of $A_n$ is isomorphic to $\AGL(1,n) \cap A_n$, and hence has order $\frac{1}{2}n(n-1)$.
\end{corollary}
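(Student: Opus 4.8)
The plan is to run the O'Nan--Scott classification (Theorem~\ref{thm:ONanScott}) on an odd maximal subgroup $M$ of $A_n$ with $n \ge 5$ and eliminate every case except the affine one with $k=1$. In each case the theorem either presents $M$ as $H \cap A_n$ for an explicit overgroup $H$ (so $[H:M] \le 2$), or it tells us that $M$ has a normal subgroup isomorphic to a nonabelian simple group or a power thereof; in the latter situation Proposition~\ref{prop:Simple4} gives that $4$ divides the order of that normal subgroup, hence a power of $2$ divides $|M|$, contradicting oddness. So the whole argument reduces to checking, case by case, that $|M|$ is forced to be even except when $M = \AGL(1,p) \cap A_p$.

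For the non-affine cases: in the intransitive case $|M|$ divides $m!\,k!$ with index at most $2$ and $m + k = n \ge 5$, so oddness of $|M|$ would require at most one of $m,k$ to exceed $1$ and that one to lie in $\{2,3\}$ (the only $j \ge 2$ with $j!$ exactly divisible by $2$), forcing $n \le 4$; in the imprimitive and wreath cases $H = S_m \wr S_k$ with $m,k \ge 2$, so $2^k \mid |H|$ and $|M|$ is even; in the diagonal case $M$ has a normal subgroup isomorphic to $U^k$ with $U$ nonabelian simple and $k \ge 2$, and in the almost simple case $M$ has a normal subgroup isomorphic to a nonabelian simple group, so Proposition~\ref{prop:Simple4} makes $|M|$ even. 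In the affine case $H = \AGL(k,p)$ with $n = p^k \ge 5$: if $p = 2$ then $n \ge 8$ forces $k \ge 3$ and $2^k \mid |H|$; if $p$ is odd and $k \ge 2$ then $|\GL(k,p)| = \prod_{i=0}^{k-1}(p^k - p^i)$ has the two distinct even factors $p^k - 1$ and $p^k - p$, so $4 \mid |H|$. In each subcase $|M|$ is even, a contradiction.

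Hence $M$ must be the affine case with $k=1$: $n = p$ is prime and $M = \AGL(1,p) \cap A_p$. To finish, I will compute $|M|$ and pin down the congruence. Multiplication by a primitive root modulo $p$ is a $(p-1)$-cycle, which is an odd permutation since $p - 1$ is even; thus $\AGL(1,p) \not\le A_p$, so $M$ has index exactly $2$ in $\AGL(1,p)$ and $|M| = \tfrac12 p(p-1) = \tfrac12 n(n-1)$. Since $p$ is odd, $|M|$ is odd if and only if $\tfrac{p-1}{2}$ is odd, i.e. $p \equiv_4 3$; as $M$ was assumed odd, this gives $n = p \equiv_4 3$, and every odd maximal subgroup of $A_n$ has the stated form and order.

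The case analysis is really just $2$-adic bookkeeping backed by the O'Nan--Scott theorem and Proposition~\ref{prop:Simple4}, so no single step is deep. The part needing the most care is the affine case, where one must use the hypothesis $n \ge 5$ to exclude characteristic $2$ and rank $\ge 2$ before isolating $\AGL(1,p) \cap A_p$, and then correctly determine that this subgroup has index exactly $2$ (rather than $1$) in $\AGL(1,p)$.
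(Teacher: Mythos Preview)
Your proof is correct and follows essentially the same route as the paper: run through the O'Nan--Scott cases, show that in every case except the affine one with $k=1$ the relevant overgroup has order divisible by $4$ (using Proposition~\ref{prop:Simple4} for the diagonal and almost simple cases), and then compute $|\AGL(1,p)\cap A_p|=\tfrac12 p(p-1)$ by exhibiting an odd permutation in $\AGL(1,p)$. The only cosmetic differences are that you spell out the intransitive case more explicitly and split the affine case into $p=2$ versus $p$ odd, whereas the paper handles both at once via the order formula for $\AGL(k,p)$; your argument that multiplication by a primitive root is a $(p-1)$-cycle is exactly the paper's observation that $A_p$ contains no $(p-1)$-cycle.
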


\begin{proof}
If $H$ is a subgroup of $S_n$ that is not contained in $A_n$, then $|H \cap A_n|=\frac{1}{2}|H|$.  Then $H \cap A_n$ is even if $4$ divides $|H|$.  By inspection, $S_m \times S_k$ and $S_m \wr S_k$ have orders that are divisible by $4$ under the conditions specified in the Intransitive, Imprimitive, and Wreath Cases of Theorem~\ref{thm:ONanScott}, so $A_n$ cannot have an odd maximal subgroup in those cases.  
Similarly, any $U^k.(\operatorname{Out}(U)) \times S_k)$ and any almost simple $M$ are divisible by $4$  by Proposition~\ref{prop:Simple4}, so $A_n$ cannot have an odd maximal subgroup in the Diagonal or Almost Simple Cases of Theorem~\ref{thm:ONanScott}. 

This leaves only the Affine Case to be considered. Assume that $n=p^k$ for some prime $p$, and let $M$ be a maximal subgroup of $A_n$ 
in the Affine case of Theorem~\ref{thm:ONanScott}.  The order of $\AGL(k,p)$ is $p^k(p^k-1)(p^k-p)(p^k-p^2)\cdots(p^k-p^{k-1})$, which is divisible by $4$ if $k \geq 2$. 
Then we may assume that $k=1$, so $p=n \geq 5$ and we conclude that $p$ is odd. Then $M \cong \AGL(1,p) \cap A_p \cong \left(\ZZ_p \rtimes \ZZ_p^{\times}\right) \cap A_p$ by Remark~\ref{rem:OrdersOfElementsOfAGL}. 
Since $A_p$ does not contain a $(p-1)$-cycle, we conclude that $|M|=\frac{1}{2}p(p-1)$, which is odd if and only if $p \equiv_4 3$. 
\end{proof}

The next result follows directly from Remark~\ref{rem:OrdersOfElementsOfAGL} and Corollary~\ref{cor:AnMaximalOrders}. 

\begin{corollary}\label{cor:ElementsOfAGL}
Let $n \geq 5$.  If $A_n$ has an odd maximal subgroup $M$, then every nontrivial element of $M$ is either an $n$-cycle or a power of a product of two $\frac{1}{2}(n-1)$-cycles.
\end{corollary}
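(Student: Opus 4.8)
The plan is to start from Corollary~\ref{cor:AnMaximalOrders}, which tells us that any odd maximal subgroup $M$ of $A_n$ (for $n\geq 5$) satisfies $M\cong \AGL(1,n)\cap A_n$ with $n$ prime and $n\equiv_4 3$, and to translate the abstract structure of $\AGL(1,n)$ into a concrete description of the cycle types of its elements inside $A_n$. By Remark~\ref{rem:OrdersOfElementsOfAGL}, $\AGL(1,n)\cong \ZZ_n\rtimes\ZZ_n^{\times}$, and every nontrivial element either has order $n$ or has order dividing $n-1$. The first step is to identify which elements of $\AGL(1,n)$ actually lie in $M=\AGL(1,n)\cap A_n$ and then to determine how each acts as a permutation of the $n$ points.

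First I would handle the elements of order $n$: these are exactly the nontrivial elements of the translation subgroup $\ZZ_n\lhd\AGL(1,n)$. Each such element is a fixed-point-free permutation of the $n$ points all of whose orbits have size equal to its order; since the order is the prime $n$, it is a single $n$-cycle. An $n$-cycle is an even permutation precisely when $n$ is odd, which holds here, so all $n-1$ nontrivial translations lie in $M$ and are $n$-cycles. Next I would handle the elements whose order divides $n-1$, i.e.\ the elements lying outside the translation subgroup (equivalently, those with a nontrivial "linear part" $a\in\ZZ_n^{\times}$ with $a\neq 1$). Such a map $x\mapsto ax+b$ has a unique fixed point in $\ZZ_n$ (solve $ax+b=x$, using $a-1\in\ZZ_n^{\times}$), and on the remaining $n-1$ points it acts with all orbits of the same size $d=\operatorname{ord}(a)\mid n-1$; in fact, since $n-1$ is a power of $2$ times stuff — more carefully, since the $\ZZ_n^{\times}$-action is just multiplication, the orbit of any nonzero point under $\langle a\rangle$ has size $d$, so the permutation is a product of $(n-1)/d$ disjoint $d$-cycles fixing one point. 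The parity of such a permutation is $((n-1)/d)(d-1)\bmod 2$. Using $n\equiv_4 3$, so $n-1\equiv_4 2$, I would check that this element lies in $A_n$ exactly when $(n-1)/d$ is even, i.e.\ when $d$ is twice an odd number — in particular $d$ is even — and then its cycle structure is a product of $(n-1)/d$ disjoint $d$-cycles. Writing $d=2m$ with $(n-1)/d$ odd, such an element is a specific power (the $m$th power, say, suitably normalized) of a product of two $\frac{1}{2}(n-1)$-cycles: the unique subgroup of $\ZZ_n^{\times}$ of index $2$ is generated by an element $g$ of order $\frac{1}{2}(n-1)$, the corresponding affine map (with its fixed point) is a product of two $\frac{1}{2}(n-1)$-cycles, and every other relevant element is a power of this one since $\langle a\rangle\subseteq\langle g\rangle$ forces $x\mapsto ax$ to be a power of $x\mapsto gx$.

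The only mildly delicate point — and the step I expect to require the most care — is verifying that every nontrivial element of $M$ with nontrivial linear part is in fact a \emph{power} of a single product of two $\frac{1}{2}(n-1)$-cycles, rather than merely having the cycle type of such a power; this amounts to observing that the elements of $M$ outside the translation subgroup form a coset whose nontrivial intersection with $A_n$ is contained in the subgroup $\{x\mapsto ax : a\in(\ZZ_n^{\times})^2\text{-ish}\}$ generated by one element of order $\frac{1}{2}(n-1)$, together with the bookkeeping that conjugation in $\AGL(1,n)$ moves the fixed point around but preserves being a power. One must also double-check the edge behavior: the element $x\mapsto -x$ has order $2$, fixes $0$, and is a product of $\frac{1}{2}(n-1)$ transpositions, which is a power (the $\frac12(n-1)$st) of a product of two $\frac{1}{2}(n-1)$-cycles only if $\frac{1}{2}(n-1)$ is even — but when $\frac{1}{2}(n-1)$ is odd, $x\mapsto -x$ is an odd permutation and hence not in $M$ at all, so it does not cause a problem. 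Assembling these cases — translations give $n$-cycles, everything else in $M$ is a power of a product of two $\frac12(n-1)$-cycles — yields the claim. I would keep the write-up short by citing Remark~\ref{rem:OrdersOfElementsOfAGL} and Corollary~\ref{cor:AnMaximalOrders} for the group structure and only spelling out the cycle-type computation.
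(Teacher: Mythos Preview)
Your approach is exactly what the paper intends when it says the result ``follows directly from Remark~\ref{rem:OrdersOfElementsOfAGL} and Corollary~\ref{cor:AnMaximalOrders}'': reduce to $M\cong\AGL(1,n)\cap A_n$ with $n$ prime and $n\equiv_4 3$, observe that the order-$n$ elements are $n$-cycles, and show that every remaining nontrivial element lies in a cyclic point stabilizer of order $\tfrac12(n-1)$ generated by a product of two $\tfrac12(n-1)$-cycles.

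There is, however, a parity slip in your outline. You correctly note that $x\mapsto ax+b$ (with $a\neq 1$) is a product of $(n-1)/d$ disjoint $d$-cycles, where $d=\operatorname{ord}(a)$, and that this permutation is even exactly when $(n-1)/d$ is even. But since $n\equiv_4 3$ gives $n-1=2m$ with $m$ odd, the condition ``$(n-1)/d$ even'' is equivalent to $d\mid m$, i.e.\ $d$ is \emph{odd}---not ``$d$ is twice an odd number'' as you wrote (and your next clause, ``writing $d=2m$ with $(n-1)/d$ odd'', contradicts the sentence before it). The slip is not harmless: a power of a product of two $\tfrac12(n-1)$-cycles has order dividing the odd number $\tfrac12(n-1)$, so an even $d$ would make the desired conclusion impossible. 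Once corrected, the argument is clean: the non-translation elements of $M$ are exactly the affine maps whose linear part lies in the unique index-$2$ (hence odd-order) subgroup of $\ZZ_n^\times$; each such element lies in the $M$-stabilizer of its unique fixed point, and that stabilizer is cyclic of order $\tfrac12(n-1)$, generated by a product of two disjoint $\tfrac12(n-1)$-cycles.
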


\begin{proposition}\label{prop:AnOddMaximals}
Let $n \geq 5$.  Then $A_n$ has an odd maximal subgroup if and only if $n$ is prime, $n\equiv_4 3$, and $n \not\in \{7,11,23\}$.      
\end{proposition}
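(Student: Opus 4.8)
The plan is to convert the statement into a question about the subgroup $M:=\AGL(1,n)\cap A_n$ and then settle it with the classification of $2$-transitive permutation groups, using Corollary~\ref{cor:AnMaximalOrders} for the reduction. By that corollary every odd maximal subgroup of $A_n$ is isomorphic to $M$, and $|M|=\tfrac12 n(n-1)$; conversely, when $n$ is prime with $n\equiv_4 3$, $|M|=n\cdot\tfrac{n-1}{2}$ is odd. So it suffices to prove: for a prime $p\ge 5$ with $p\equiv_4 3$, the subgroup $M\le A_p$ is maximal in $A_p$ if and only if $p\notin\{7,11,23\}$. Here $M$ is the normalizer in $A_p$ of a Sylow $p$-subgroup $P=\langle\sigma\rangle$, where $\sigma$ is a $p$-cycle, and $M=P\rtimes\langle\tau\rangle$ with $\tau$ of order $\tfrac{p-1}{2}$ normalizing $P$; indeed, by Remark~\ref{rem:OrdersOfElementsOfAGL} and Corollary~\ref{cor:ElementsOfAGL}, $\tau$ may be taken to be a product of two $\tfrac{p-1}{2}$-cycles fixing one point. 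Finally, any subgroup of $A_p$ isomorphic to $M$ has a (necessarily normal) Sylow $p$-subgroup and hence is an $A_p$-conjugate of such a normalizer, so it is enough to decide maximality for one fixed $M$.

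For $p\in\{7,11,23\}$ I would exhibit a proper overgroup of $M$ in $A_p$. The simple groups $\operatorname{PSL}(3,2)$ on the $7$ points of $\mathrm{PG}(2,2)$, $\operatorname{PSL}(2,11)$ in its exceptional $2$-transitive action of degree $11$, and $M_{23}$ act faithfully, so they embed in $A_7$, $A_{11}$, $A_{23}$, properly because their orders $168$, $660$, $|M_{23}|$ are smaller than $|A_p|$. In each case the normalizer of a Sylow $p$-subgroup inside the simple group has order $21$, $55$, $253$ respectively, which equals $|M|$; so that normalizer is a conjugate of $M$, and $M$ therefore lies strictly between this simple group and $A_p$, hence is not maximal.

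For the other primes, suppose $p\equiv_4 3$, $p\ge 5$, $p\notin\{7,11,23\}$ and $M<H\le A_p$; I would show $H=A_p$. Since $\sigma\in H$, the group $H$ is transitive, hence primitive (prime degree), and $H\not\le\AGL(1,p)$ because $\AGL(1,p)\cap A_p=M$. By Burnside's theorem on transitive groups of prime degree, $H$ is $2$-transitive; not being of affine type, it is almost simple with socle $T$ a simple $2$-transitive group of degree $p$, by the classification of $2$-transitive groups. Because $p\notin\{7,11,23\}$, either $T=A_p$, whence $H=A_p$, or $T=\operatorname{PSL}(d,q)$ in its natural projective action with $p=(q^d-1)/(q-1)$ and $d$ prime. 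In the second case $P\le T$ (as $p\nmid|\operatorname{Out}(T)|$), $P$ is the image of a Singer cycle, so $P$ is self-centralizing in $T$ and $|N_T(P)/P|$ divides $d$; combined with $|N_H(P):N_T(P)|\le|\operatorname{Out}(\operatorname{PSL}(d,q))|\le 2df$ (writing $q=r^f$ with $r$ prime) and the fact that $\tau$ normalizes $P$ with $\gcd(|\tau|,p)=1$, this gives
\[
\tfrac{p-1}{2}\ \big|\ |N_H(P)/P|\ \le\ d\,|\operatorname{Out}(\operatorname{PSL}(d,q))|\ \le\ 2d^2 f.
\]
Since $p-1=q+q^2+\cdots+q^{d-1}\ge q^{d-1}\ge 2^{f(d-1)}$, only finitely many $(d,q)$ satisfy $\tfrac{p-1}{2}\le 2d^2 f$; running through them and discarding the pairs for which $p$ is composite, $p\equiv_4 1$, or $\operatorname{PSL}(d,q)\cong A_p$, one is left with $p=7$ (excluded by hypothesis) and $p\in\{31,127\}$, arising from $\operatorname{PSL}(3,5)$, $\operatorname{PSL}(5,2)$, and $\operatorname{PSL}(7,2)$. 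For these, $\tfrac{p-1}{2}\in\{15,63\}$ while $d\,|\operatorname{Out}|\in\{6,10,14\}$, so $\tfrac{p-1}{2}>d\,|\operatorname{Out}|$, contradicting the displayed inequality. Hence $H=A_p$, and $M$ is maximal.

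The step I expect to be most delicate is this last paragraph: one must make sure $P$ is a full Sylow $p$-subgroup of both $H$ and $T$ (using $p\,\|\,p!$ and $p\nmid|\operatorname{Out}(T)|$), pin down $N_T(P)$ as the Singer-cycle normalizer of order dividing $pd$, and carefully handle the small-degree coincidences — namely $\operatorname{PSL}(d,q)\cong A_p$ and the exceptional degree-$11$ action of $\operatorname{PSL}(2,11)$ — which are precisely what single out $7$, $11$, $23$. The classification of $2$-transitive groups of prime degree (together with Burnside's theorem) is used as a black box.
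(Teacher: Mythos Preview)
Your argument is essentially correct, but it takes a genuinely different route from the paper's proof. The paper dispatches both directions in two lines by invoking the main theorem of Liebeck--Praeger--Saxl \cite{liebeck1987classification} as a black box: that reference tells you exactly when $\AGL(1,n)\cap A_n$ is maximal in $A_n$, and the exceptions $\{7,11,23\}$ are read off directly. You instead reprove this special case by hand, using Burnside's theorem on transitive groups of prime degree together with the classification of finite $2$-transitive groups, and then eliminating the projective linear candidates via the Singer-normalizer bound $(p-1)/2 \mid |N_H(P)/P| \le d\cdot|\operatorname{Out}(\operatorname{PSL}(d,q))|$. Your approach is more transparent about \emph{why} only $7,11,23$ are exceptional and what the obstructing overgroups are, at the cost of a longer case analysis; the paper's approach is shorter but shifts all the work into the cited classification. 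Both ultimately depend on CFSG.

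Two small points. First, in your treatment of $p\in\{7,11,23\}$ you write that ``$M$ therefore lies strictly between this simple group and $A_p$''; you mean the reverse containment, namely that the simple group lies strictly between $M$ and $A_p$. The surrounding order computations make your intent clear, so this is just a slip. Second, your finiteness step and the subsequent check for $p\in\{31,127\}$ are fine, but when you ``discard the pairs for which \ldots $p\equiv_4 1$'' you should note explicitly that this already removes the entire $d=2$ family (since $p=q+1$ prime with $p\ge 5$ forces $q=2^f$ and then $p\equiv_4 1$), so that only $d\in\{3,5,7\}$ need be tabulated.
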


\begin{proof}
Suppose that $A_n$ has an odd maximal subgroup $M$.  Then by Corollary~\ref{cor:AnMaximalOrders}, $n$ is prime, $n \equiv_4 3$, and $M \cong \AGL(1,n) \cap A_n$.    
The subgroup $M \cong \AGL(1,n) \cap A_n$ is not maximal if $n \in \{7,11,23\}$ by the main theorem from ~\cite{liebeck1987classification}.

Thus, we may assume that $n$ is prime, $n \equiv_4 3$, and $n \not\in \{7,11,23\}$.  Again by the main theorem from~\cite{liebeck1987classification}, we have that $\AGL(1,n) \cap A_n$ is maximal in $A_n$.  Its order is $\frac{1}{2}n(n-1)$ by Corollary~\ref{cor:AnMaximalOrders}, which is odd because $n \equiv_4 3$.
\end{proof}

Recall~\cite{Dubner} that a prime $p$ is a \emph{generalized repunit prime} if there is a prime-power $q$ and integer $n$ such that $p=(q^n-1)/(q-1)$.   The next definition will simplify the statement of the proposition that follows.

\begin{definition}
A prime $p$ is said to be a \emph{$\zeta$-prime} if all of the following conditions hold:
\begin{enumerate}
\item $p \equiv_4 3$;
\item\label{item:exceptions} $p \notin \{11,23\}$;
\item $p$ is not a generalized repunit prime.
\end{enumerate}
\end{definition}

The $\zeta$-primes that are less than $100$ are $19, 43, 47, 59, 67, 71, 79$, and $83$~\cite{BeneshErnstSiebenZetaOEIS}.  Note that $7=111_2$ is a generalized repunit prime, so we did not have to explicitly exclude the prime $7$ from Condition~(\ref{item:exceptions}) to match the set of exceptions from Proposition~\ref{prop:AnOddMaximals}.  

\begin{proposition}\label{prop:AnEvensCoverCondition}
If $n \geq 5$, then the following are equivalent.
\begin{enumerate} 
\item\label{item:FailCover} The even maximal subgroups of $n$ fail to cover $A_n$.
\item\label{item:nCycles} There exists an $n$-cycle of $A_n$ that is not in any even maximal subgroup.
\item\label{item:ZetaPrime} $n$ is a $\zeta$-prime.
\end{enumerate} 
\end{proposition}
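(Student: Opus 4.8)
The plan is to prove the equivalence by showing the cycle of implications $(\ref{item:ZetaPrime}) \Rightarrow (\ref{item:nCycles}) \Rightarrow (\ref{item:FailCover}) \Rightarrow (\ref{item:ZetaPrime})$, with the bulk of the work going into the first implication. The easy step is $(\ref{item:nCycles}) \Rightarrow (\ref{item:FailCover})$: if some $n$-cycle lies in no even maximal subgroup, then certainly the even maximal subgroups do not cover $A_n$, since every element of $A_n$ other than the identity lies in \emph{some} maximal subgroup, and the identity lies in an even maximal subgroup when $n \geq 5$ (any maximal subgroup of $A_n$ for $n\ge 5$ has even order unless we are in the Affine case with $n$ prime $\equiv_4 3$, and even then not every maximal subgroup is that one). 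Actually the cleanest route is contrapositive: if the even maximal subgroups cover $A_n$, they in particular cover the $n$-cycles, so $(\ref{item:FailCover})$ fails to imply $(\ref{item:nCycles})$ backwards — so I should be careful and instead argue $(\ref{item:FailCover}) \Rightarrow (\ref{item:ZetaPrime})$ and $(\ref{item:ZetaPrime}) \Rightarrow (\ref{item:nCycles}) \Rightarrow (\ref{item:FailCover})$ to close the loop.

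For $(\ref{item:FailCover}) \Rightarrow (\ref{item:ZetaPrime})$: suppose the even maximal subgroups fail to cover $A_n$, so some $g \in A_n$ lies only in odd maximal subgroups. In particular $A_n$ has an odd maximal subgroup, so by Proposition~\ref{prop:AnOddMaximals}, $n$ is prime with $n \equiv_4 3$ and $n \notin \{7,11,23\}$. It remains to rule out that $n$ is a generalized repunit prime. By Corollary~\ref{cor:ElementsOfAGL}, $g$ is either an $n$-cycle or a power of a product of two $\tfrac12(n-1)$-cycles; the latter type has order dividing $\tfrac12(n-1) < n$, hence fixes no structure forcing it into only odd subgroups — more to the point, I will show such $g$ always lies in an intransitive even maximal subgroup $(S_m\times S_k)\cap A_n$ by exhibiting a fixed block, so $g$ must be an $n$-cycle. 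If $n = (q^d-1)/(q-1)$ is a generalized repunit prime with $d \geq 2$, then an $n$-cycle generating the Singer-type cyclic action sits inside a maximal subgroup coming from the projective linear group $\mathrm{P\Gamma L}(d,q)$ acting on the $n$ points of projective space, and one checks this subgroup has order divisible by $4$ (it contains $\mathrm{PSL}(d,q)$, whose order is divisible by $4$ by Proposition~\ref{prop:Simple4} once $\mathrm{PSL}(d,q)$ is nonabelian simple, and the small non-simple cases $\mathrm{PSL}(2,2),\mathrm{PSL}(2,3)$ give $n = 3$, excluded). Hence every $n$-cycle lies in an even maximal subgroup, contradiction; so $n$ is not a generalized repunit prime, and $n$ is a $\zeta$-prime.

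For $(\ref{item:ZetaPrime}) \Rightarrow (\ref{item:nCycles})$: let $n$ be a $\zeta$-prime and let $g$ be an $n$-cycle. I must show $g$ lies in no even maximal subgroup. Run through the O'Nan--Scott list (Theorem~\ref{thm:ONanScott}) for a maximal subgroup $M$ containing $g$: since $g$ is an $n$-cycle it acts transitively, ruling out the Intransitive case; since $n$ is prime the Imprimitive, Wreath, and Diagonal cases are impossible for arithmetic reasons ($n = mk$, $n = m^k$, $n = |U|^{k-1}$ all require $n$ composite for $k\ge 2$); the Affine case with $n = p^k$ prime forces $k = 1$, giving $M \cong \AGL(1,n)\cap A_n$, which is odd by Corollary~\ref{cor:AnMaximalOrders} since $n \equiv_4 3$. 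That leaves the Almost Simple case, $U \lhd M \leq \operatorname{Aut}(U)$ with $U$ nonabelian simple acting primitively of prime degree $n$. The primitive groups of prime degree are classified (Burnside/Galois; see~\cite{liebeck1987classification}): $U$ is either cyclic (excluded, nonabelian) or $A_n$/$S_n$ itself (not a proper maximal subgroup), or $\mathrm{PSL}(d,q)$ acting on projective space with $n = (q^d-1)/(q-1)$, or one of finitely many sporadic-type exceptions with $n \in \{11, 23, \dots\}$ which are exactly excluded by $p \notin \{11,23\}$ and $p \neq 7$. The projective case is precisely the generalized repunit condition, which $\zeta$-primes are defined to avoid. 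So no almost simple $M$ containing $g$ exists either, and $g$ lies in no even maximal subgroup, giving $(\ref{item:nCycles})$.

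The main obstacle is the Almost Simple case analysis: I need the classification of primitive permutation groups of prime degree precisely enough to see that the only infinite family is the projective one with the repunit degree formula, and to confirm the finite list of exceptions is contained in $\{7, 11, 23\}$ (all already handled by Proposition~\ref{prop:AnOddMaximals}'s exclusions and the repunit definition, since $7$ is a repunit). Getting the divisibility-by-$4$ bookkeeping right for $\mathrm{P\Gamma L}(d,q)\cap A_n$ versus when it could accidentally be odd — and checking it is genuinely a maximal subgroup of $A_n$ in the repunit case, not merely contained in one — is the delicate bit; Proposition~\ref{prop:Simple4} does most of the divisibility work once the simple group is in hand.
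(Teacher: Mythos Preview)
Your proposal is correct and follows essentially the same approach as the paper: reduce via Proposition~\ref{prop:AnOddMaximals} and Corollary~\ref{cor:ElementsOfAGL} to showing that $n$-cycles are the only elements that can fail to be covered, then use the O'Nan--Scott Theorem to reduce the question of which even maximal subgroups contain an $n$-cycle to the Almost Simple case, and finally invoke a classification of almost simple primitive groups of prime degree containing an $n$-cycle. The only substantive difference is in the citation for that last step: the paper appeals directly to \cite[Table~3]{liebeck1985primitive}, reading off the rows where the degree equals the prime (yielding exactly the $\operatorname{PSL}(d,q)$ family, $M_{11}$, $M_{23}$, and $\operatorname{Sz}(q)$, the last excluded since it forces $n\equiv_4 1$), whereas you invoke the classification of primitive groups of prime degree more informally and leave the completeness of the exception list $\{7,11,23\}$ as the acknowledged ``delicate bit''---your instinct there is right, and Liebeck's table is exactly the reference that closes it cleanly.
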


\begin{proof}
First, we show that Items~(\ref{item:FailCover}) and~(\ref{item:nCycles}) are equivalent.  
If there exists an $n$-cycle of $A_n$ that is not in any even maximal subgroup, then the even maximal subgroups of $A_n$ fail to cover $A_n$ by definition.  
Now suppose that the even maximal subgroups of $n$ fail to cover $A_n$.  
Then there must be an odd maximal subgroup of $A_n$, so $n$ is equal to some prime $p$ with $p \equiv_4 3$ and $p \not\in\{7,11,23\}$ by Proposition~\ref{prop:AnOddMaximals}.  
Let $r$ be the integer such that $p=2r+1$.  If $M$ is an odd maximal subgroup of $A_p$, then each element of $M$ is either trivial, 
a $p$-cycle, or a power of two disjoint $r$-cycles by Corollary~\ref{cor:ElementsOfAGL}.  
The identity and every power of two disjoint $r$-cycles is contained in an even subgroup isomorphic to $(S_r \times S_r) \cap A_p$, which is contained in some even maximal subgroup.  
Therefore, it must be a $p$-cycle that is not contained an even maximal subgroup. 

To finish, we show that Items~(\ref{item:nCycles}) and~(\ref{item:ZetaPrime}) are equivalent.  
Let $g$ be an $n$-cycle of $A_n$ that is not in any even maximal subgroup.  
Then $g$ must be contained in an odd maximal subgroup because $A_n$ is not cyclic. 
Proposition~\ref{prop:AnOddMaximals} implies that $n$ is a prime such that $n \equiv_4 3$ with $n \not\in\{7,11,23\}$.   
Theorem~\ref{thm:ONanScott} and the fact that $n$ is prime imply that any maximal subgroup containing $g$ must 
be isomorphic to $\operatorname{AGL}(1,n) \cap A_n$ or an almost simple group $H$ that acts primitively on a set of size $n$.   
The former has odd order, and~\cite[Table~3]{liebeck1985primitive} lists all the possibilities for $H$.   
Therefore, $g$ will be contained in an even maximal subgroup $H$ if and only if $H$  appears in~\cite[Table~3]{liebeck1985primitive}.  
The only rows in this table where the second column (labeled by $n$) is equal to the fourth column (labeled by $p$, which is equal to $n$ in our case) correspond to the 
first $\operatorname{PSL}(d,q)$, $\operatorname{Sz}(q)$, $M_{23}$, and $M_{11}$.  But the row for $\operatorname{PSL}(d,q)$ implies that $n$ is a generalized repunit prime, 
the row for $\operatorname{Sz}(q)$ implies that $n \equiv_4 1$, and the other two imply that $n \in \{7,11,23\}$.  So the $\zeta$-primes were defined exactly to exclude the entries in this table.  
Therefore, if $g$ is not contained in any even maximal subgroup, then $g$ is not in any $H$ listed in \cite[Table~3]{liebeck1985primitive} and hence $p$ is a $\zeta$-prime.  

Conversely, assume that $n$ is a $\zeta$-prime. Then $n$ is a prime such that $n \equiv_4 3$  and $n\notin\{7,11,23\}$.
So $A_n$ has no subgroup $H$ from \cite[Table~3]{liebeck1985primitive}, and hence $g$ is not 
contained in any even maximal subgroup, proving Item~(\ref{item:nCycles}).
\end{proof}

%%%--------------------------------------------------------------%%%

\begin{subsection}{Avoidance Games for Generating Alternating Groups}

%%%--------------------------------------------------------------%%%

We will use the following result to  determine the nim-numbers of $\dng(A_n)$.

\begin{proposition}\label{prop:DNGCriteria}
\cite[Corollary~6.4]{BeneshErnstSiebenDNG} Let $G$ be a nontrivial finite group.
\begin{enumerate}
\item If all maximal subgroups of $G$ are odd, then $\dng(G)=*1$.
\item If all maximal subgroups of $G$ are even, then $\dng(G)=*0$.
\item Assume $G$ has both even and odd maximal subgroups.
  \begin{enumerate}
 \setlength{\itemindent}{-14pt}
  \item If the set of even maximal subgroups covers $G$, then $\dng(G)=*0$.
  \item If the set of even maximal subgroups does not cover $G$, then $\dng(G)=*3$.
  \end{enumerate}
\end{enumerate}
\end{proposition}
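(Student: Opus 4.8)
The plan is to deduce this from the Benesh--Ernst--Sieben classification \cite[Theorem~6.3]{BeneshErnstSiebenDNG} stated above, since both results partition the nontrivial finite groups and all that is needed is to match the cases. The one auxiliary fact I would record first is a bookkeeping lemma about cyclic groups: for $G\cong\ZZ_n$ with $n>1$ the maximal subgroups of $G$ are the subgroups $\langle p\rangle$ as $p$ ranges over the primes dividing $n$, of order $n/p$, and running over all such $p$ these are all even precisely when $4\mid n$. I would also use the two standard facts that every proper subgroup of a finite group lies in a maximal subgroup, and that in a noncyclic group every element lies in a maximal subgroup.

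Now the case analysis. If all maximal subgroups of $G$ are odd, then $|G|$ cannot be even with $|G|>2$: by Cauchy's Theorem there would be an involution $t$, and $\langle t\rangle$ would be a proper subgroup, hence contained in a maximal subgroup of even order, a contradiction; so $|G|=2$ or $G$ is odd, and \cite[Theorem~6.3]{BeneshErnstSiebenDNG} gives $\dng(G)=*1$. If all maximal subgroups of $G$ are even, then either $G$ is cyclic, in which case the lemma forces $4\mid|G|$, or $G$ is noncyclic, in which case every element lies in an (even) maximal subgroup and so the even maximal subgroups cover $G$; either way \cite[Theorem~6.3]{BeneshErnstSiebenDNG} gives $\dng(G)=*0$. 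Finally, suppose $G$ has both even and odd maximal subgroups. If the even maximal subgroups cover $G$, then \cite[Theorem~6.3]{BeneshErnstSiebenDNG} gives $\dng(G)=*0$ at once. If they do not cover $G$, I must check that $G$ falls into the residual case of \cite[Theorem~6.3]{BeneshErnstSiebenDNG}: possessing an even maximal subgroup forces $|G|$ to be even and greater than $2$, so the first hypothesis fails; and $G$ cannot be cyclic with $4\mid|G|$, since the lemma would then make every maximal subgroup even, contradicting the presence of an odd one, while the other disjunct of the second hypothesis fails by the non-covering assumption. Hence \cite[Theorem~6.3]{BeneshErnstSiebenDNG} gives $\dng(G)=*3$.

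The main obstacle is essentially nonexistent once \cite[Theorem~6.3]{BeneshErnstSiebenDNG} is granted: the only care required is in navigating its disjunctive hypotheses, above all in confirming that the ``both parities, even maximals do not cover'' situation genuinely lands in the ``otherwise'' clause rather than in one of the first two. A self-contained argument would instead carry the real weight: it would observe that the terminal positions of $\dng(G)$ are exactly the maximal subgroups of $G$, so that the parity of the maximal subgroup eventually reached decides the winner, and then push this observation through the structure-class and type apparatus of \cite{ErnstSieben} to pin down the nim-value; that computation is precisely what \cite[Theorem~6.3]{BeneshErnstSiebenDNG} packages, which is why I would simply invoke it here.
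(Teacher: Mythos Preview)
Your derivation is correct, and in fact the paper gives no proof of this statement at all: it is simply quoted verbatim as \cite[Corollary~6.4]{BeneshErnstSiebenDNG}. Since the paper also quotes \cite[Theorem~6.3]{BeneshErnstSiebenDNG} earlier, your approach of reducing the corollary to the theorem by a case match is exactly the natural one, and presumably mirrors how the cited authors themselves obtained Corollary~6.4 from Theorem~6.3.
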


\begin{theorem}\label{thm:AlternatingDNG}
The values of $\dng(A_n)$ are \[
\dng(A_n) =  
   \begin{dcases} 
      *3, & n \in \{3,4\} \text{ or $n$ is a $\zeta$-prime}\\
      *0, & \text{otherwise}.
   \end{dcases}
\]
\end{theorem}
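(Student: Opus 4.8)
The plan is to reduce the whole statement to Proposition~\ref{prop:DNGCriteria}, which determines $\dng(G)$ from three pieces of data about the maximal subgroups of $G$: whether $G$ has an odd one, whether $G$ has an even one, and---if it has both---whether the even maximal subgroups cover $G$. Since $A_1$ and $A_2$ are trivial, no avoidance game is defined there; the cases $n\in\{3,4\}$ I would settle by listing their maximal subgroups and applying Proposition~\ref{prop:DNGCriteria} directly ($A_3\cong\ZZ_3$ via part~(1), and $A_4$ via part~(3)(b), since its unique even maximal subgroup---the normal Klein four-group---is too small to cover a group of order $12$). Some of these small values are already recorded in~\cite{ErnstSieben}. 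The substance therefore lies in the range $n\geq5$.

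For $n\geq5$ I would branch on whether $A_n$ has an odd maximal subgroup. If it does not, every maximal subgroup of $A_n$ is even, so Proposition~\ref{prop:DNGCriteria}(2) gives $\dng(A_n)=*0$; and such an $n$ is not a $\zeta$-prime, because by Proposition~\ref{prop:AnOddMaximals} an odd maximal subgroup exists exactly when $n$ is a prime $\equiv_4 3$ outside $\{7,11,23\}$, and every $\zeta$-prime has this form (the prime $7$ being automatically excluded as a generalized repunit). Hence this subcase lands in the ``otherwise'' clause. If instead $A_n$ has an odd maximal subgroup, then it also has an even one---a double transposition such as $(12)(34)$ generates a proper subgroup, which lies in some maximal subgroup, necessarily of even order---so $A_n$ has maximal subgroups of both parities and Proposition~\ref{prop:DNGCriteria}(3) is the operative case. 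Now Proposition~\ref{prop:AnEvensCoverCondition} supplies exactly what is needed: by the equivalence of Items~\ref{item:FailCover} and~\ref{item:ZetaPrime}, the even maximal subgroups of $A_n$ fail to cover $A_n$ precisely when $n$ is a $\zeta$-prime. Plugging this into the two sub-cases of Proposition~\ref{prop:DNGCriteria}(3) yields $\dng(A_n)=*3$ when $n$ is a $\zeta$-prime and $\dng(A_n)=*0$ otherwise.

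It then remains to confirm that these outcomes assemble into the stated dichotomy: for $n\geq5$ the value is $*3$ exactly on the $\zeta$-primes and $*0$ on every remaining $n$ (the two $*0$ regimes being ``no odd maximal subgroup'' and ``an odd maximal subgroup is present but $n$ is not a $\zeta$-prime'', the latter being exactly the $n$ possessing an odd maximal subgroup that are also generalized repunit primes), which together with the values at $n\in\{3,4\}$ is the claim. I do not anticipate a conceptual obstacle: the genuinely hard input---the O'Nan--Scott reduction, Liebeck's classification of the relevant primitive permutation groups, and the translation of ``the even maximal subgroups cover $A_n$'' into the arithmetic condition ``$n$ is a $\zeta$-prime''---has already been compressed into Propositions~\ref{prop:AnOddMaximals} and~\ref{prop:AnEvensCoverCondition}. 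The only points needing care are bookkeeping ones: checking that an odd maximal subgroup never forces \emph{all} maximal subgroups of $A_n$ to be odd (so that Proposition~\ref{prop:DNGCriteria}(3), rather than~(1), applies for $n\geq5$), and handling $n=3,4$ by hand, since Propositions~\ref{prop:AnOddMaximals} and~\ref{prop:AnEvensCoverCondition} are stated only for $n\geq5$.
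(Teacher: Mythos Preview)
Your proposal is correct and follows essentially the same route as the paper: both reduce to Proposition~\ref{prop:DNGCriteria} using Proposition~\ref{prop:AnEvensCoverCondition} for the covering criterion and the double transposition $(1,2)(3,4)$ to guarantee an even maximal subgroup, with the small cases $n\in\{3,4\}$ handled separately (the paper simply cites~\cite{ErnstSieben}). The only cosmetic difference is that the paper branches directly on whether $n$ is a $\zeta$-prime, whereas you first branch on the existence of an odd maximal subgroup and then on the $\zeta$-prime condition; the content is the same.
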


\begin{proof}
The cases where $n \in \{3,4\}$ were done in~\cite{ErnstSieben}, so assume $n \geq 5$.  
If $n$ is not a $\zeta$-prime, then the set of even maximal subgroups covers $A_n$ by Proposition~\ref{prop:AnEvensCoverCondition};  in this case, $\dng(A_n)=*0$ by Proposition~\ref{prop:DNGCriteria}.  If $n$ is a $\zeta$-prime, then the set of even maximal subgroups fails to cover $A_n$ by Proposition~\ref{prop:AnEvensCoverCondition}.  This implies that $A_n$ has an odd maximal subgroup. The group $A_n$ contains the proper subgroup  $\langle (1,2)(3,4) \rangle$ of order $2$, and so $A_n$ must also contain an even maximal subgroup.  We may conclude that $\dng(A_n)=*3$ if $n$ is a $\zeta$-prime by Proposition~\ref{prop:DNGCriteria}.  
\end{proof}

Just like for $S_1$,  $A_1$ and $A_2$ are trivial, so $\dng(A_1)$ and $\dng(A_2)$ do not exist. 

\end{subsection}

%%%--------------------------------------------------------------%%%

\begin{subsection}{Achievement Games for Generating Alternating Groups}

%%%--------------------------------------------------------------%%%

We will see that $\zeta$-primes play an important role in determining the nim-numbers of $\gen(A_n)$ as they did for $\dng(A_n)$.  The following theorem refutes the portion of~\cite[Conjecture~9.1]{ErnstSieben} on alternating groups. 

\begin{theorem}\label{thm:AlternatingGEN}
The values of $\gen(A_n)$ are 
\[
\gen(A_n)  =  
   \begin{dcases}
      *0, & n \in \{1,2\} \\
      *2, & n=3 \\
      *3, & n=4  \\ 
      *4, & \text{$n$ is a $\zeta$-prime} \\ 
      *1, & \text{otherwise}. 
   \end{dcases}
\]
\end{theorem}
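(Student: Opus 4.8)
The plan is to reduce everything to the $\Gamma_1$ machinery of Section~3. The cases $n\in\{1,2\}$ are trivial groups and the cases $n\in\{3,4\}$ are handled in~\cite{ErnstSieben} (here $A_3\cong\ZZ_3$ is cyclic and $A_4$ is small); I note that $\gen(A_4)=*3$ also follows from Proposition~\ref{prop:PossibleGENs}, since a direct check gives $\otype(X_{\Phi(A_4)})=\{t_2,t_4\}$. So I would pass immediately to $n\ge 5$, where $A_n\in\Gamma_1$ by~\cite{chigira1997generating} and $A_n$ is even because it contains $(1,2)(3,4)$. Then $\Phi(A_n)=\{e\}$ by Proposition~\ref{prop:GammaFrattini}, and Proposition~\ref{prop:PossibleGENs} reduces the whole problem to deciding whether $t_2$ and $t_3$ lie in $\otype(X_{\Phi(A_n)})$.

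If $n$ is not a $\zeta$-prime, then the even maximal subgroups cover $A_n$ by Proposition~\ref{prop:AnEvensCoverCondition}, so Corollary~\ref{cor:GeneralGENCorollary} gives $\gen(A_n)=*1$. The real content is the $\zeta$-prime case, where I must show $t_2,t_3\in\otype(X_{\Phi(A_n)})$. For $t_2$: Proposition~\ref{prop:AnEvensCoverCondition} supplies an $n$-cycle $g\in A_n$ lying in no even maximal subgroup, so every maximal subgroup containing $g$ is odd; hence $\lceil g\rceil$ is an odd, non-Frattini intersection subgroup contained only in odd maximal subgroups, and Proposition~\ref{prop:AllMaximalsSameParityTypes} gives $\type(X_{\lceil g\rceil})=t_2$. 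Since $\{g\}$ is an option of the starting position $\emptyset\in X_{\Phi(A_n)}$ and $\lceil g\rceil\ne\Phi(A_n)$, this shows $t_2\in\otype(X_{\Phi(A_n)})$.

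The hard part is producing $t_3\in\otype(X_{\Phi(A_n)})$, and here I would use the explicit structure of the odd maximal subgroup. Identify the $n$ points with $\ZZ_n$; let $g\colon x\mapsto x+1$, an even $n$-cycle, and put $M:=N_{A_n}(\langle g\rangle)$, which is an odd maximal subgroup isomorphic to $\AGL(1,n)\cap A_n$ since $n$ is a prime with $n\equiv_4 3$ and $n\notin\{7,11,23\}$ (Proposition~\ref{prop:AnOddMaximals}). Let $h\colon x\mapsto ax$ for some $a\in\ZZ_n^{\times}$ of order $r:=\tfrac{1}{2}(n-1)$; using Remark~\ref{rem:OrdersOfElementsOfAGL} one checks $h\in M$, and $h$ has odd order $r>1$ and fixes only the point $0$ (by Corollary~\ref{cor:ElementsOfAGL}, $h$ is in fact a product of two disjoint $r$-cycles). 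Therefore $h$ also lies in the even maximal subgroup $(S_{n-1}\times S_1)\cap A_n$, the stabilizer of the point $0$, while $g$ does not. Consequently $\lceil h\rceil\subseteq M$ is an odd, non-Frattini intersection subgroup. Now I would exhibit two options of $X_{\lceil h\rceil}$. Adjoining $g$ to $\{h\}$ gives the position $\{h,g\}\subseteq M$, whose structure class $X_J$ has $J\subseteq M$ (so $J$ is odd) and is contained only in odd maximal subgroups (every maximal subgroup containing $g$ is odd); so $\type(X_J)=t_2$ by Proposition~\ref{prop:AllMaximalsSameParityTypes}, and $J\ne\lceil h\rceil$ because $g\notin\lceil h\rceil$. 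Adjoining an involution $t\in(S_{n-1}\times S_1)\cap A_n$ to $\{h\}$ gives a position with $\langle h,t\rangle$ contained in that even maximal subgroup, hence proper, so its structure class $X_K$ is an even, non-Frattini intersection subgroup with $\type(X_K)=t_4$ by Proposition~\ref{prop:PossibleTypes}; and $K\ne\lceil h\rceil$ since $\lceil h\rceil$ is odd and cannot contain $t$. Thus $\lceil h\rceil$ is odd with $t_2\in\otype(X_{\lceil h\rceil})$ and $t_4\in\otype(X_{\lceil h\rceil})$, so Proposition~\ref{prop:PossibleTypes} forces $\type(X_{\lceil h\rceil})=t_3$. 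Since $\{h\}$ is an option of $\emptyset$, this gives $t_3\in\otype(X_{\Phi(A_n)})$, and Proposition~\ref{prop:PossibleGENs} yields $\gen(A_n)=*4$.

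The main obstacle is precisely this $t_3$ construction: one needs a single auxiliary element that is simultaneously trapped inside an odd maximal subgroup (keeping $\lceil h\rceil$ odd) and inside an even maximal subgroup (so that $X_{\lceil h\rceil}$ acquires a type-$t_4$ option), and one must then verify that adjoining the $n$-cycle and adjoining an involution move into genuinely distinct structure classes of types $t_2$ and $t_4$. The description of the elements of $\AGL(1,n)\cap A_n$ in Remark~\ref{rem:OrdersOfElementsOfAGL} and Corollary~\ref{cor:ElementsOfAGL} is exactly what makes this work, since the order-$r$ multiplications provide an element of odd order with a fixed point, and everything else reduces to bookkeeping with $\lceil\cdot\rceil$ and the type tables of Section~3.
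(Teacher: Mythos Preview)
Your proof is correct and follows essentially the same approach as the paper's: reduce to Proposition~\ref{prop:PossibleGENs}, dispose of non-$\zeta$-primes via Corollary~\ref{cor:GeneralGENCorollary}, and in the $\zeta$-prime case exhibit $t_2$ via an $n$-cycle lying only in odd maximals and $t_3$ via the order-$r$ element of $\AGL(1,n)\cap A_n$. The only cosmetic difference is the choice of even maximal subgroup witnessing the $t_4$ option of $X_{\lceil h\rceil}$: you use the point stabilizer $(S_{n-1}\times S_1)\cap A_n$, whereas the paper uses $(S_r\times S_r)\cap A_n$; both contain the same element (a product of two disjoint $r$-cycles fixing one point), so the arguments are interchangeable.
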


\begin{proof} 
The empty set is a generating set for the trivial group, so $\gen(A_1)=*0=\gen(A_2)$. The cases where $n \in \{3,4\}$ were done in~\cite{ErnstSieben}, so assume $n \geq 5$.  By~\cite{chigira1997generating}, $A_n \in \Gamma_1$.  If every maximal subgroup of $A_n$ has even order, then $\gen(A_n)=*1$ by Corollary~\ref{cor:MainGENCorollary}.    We only need to determine what happens in the case that $A_n$ has an odd maximal subgroup.  
Then $n=p$ for some prime $p \not\in \{7,11,23\}$ such that $p \equiv_4 3$ by Proposition~\ref{prop:AnOddMaximals}. We may write $p=2r+1$ for some odd $r$.  If $p$ is not a $\zeta$-prime, then the even maximal subgroups cover $A_p$ by Proposition~\ref{prop:AnEvensCoverCondition}, so $\gen(A_p)=*1$ by Corollary~\ref{cor:GeneralGENCorollary}. 

So assume that $p$ is a $\zeta$-prime.  Then there is an odd maximal subgroup $M$ of $A_p$; we know that $M$ must be isomorphic to $\AGL(1,p) \cap A_p$ by Corollary~\ref{cor:AnMaximalOrders}.  
Then $M=\langle g,x \rangle$ where $g$ is a $p$-cycle and $x$ is the product of two $r$-cycles and each element of $M$ is either trivial, a $p$-cycle, or a power of two disjoint $r$-cycles by Corollary~\ref{cor:ElementsOfAGL}.  By Proposition~\ref{prop:AllMaximalsSameParityTypes}, $\type(M)=t_2$. 
 
The element $x$ is contained in an even subgroup isomorphic to $(S_r \times S_r) \cap A_p$, so $X_{\lceil x \rceil}$ 
has an even option of type $t_4$.  The structure class $X_{\lceil  x \rceil}$ has the type $t_2$ option $X_M$, since $\{x,g\} \in X_M$.  Proposition~\ref{prop:PossibleTypes} implies $\type(X_{\lceil  x  \rceil})=t_3$. 

Since $p$ is a $\zeta$-prime, there is a $p$-cycle $y$ that is not contained in any even maximal subgroup by Proposition~\ref{prop:AnEvensCoverCondition}.  Since all $p$-cycles are conjugate in $S_p$, we conclude that $g$ is also only contained in odd maximal subgroups, so $\type(X_{\lceil g \rceil})=t_2$ by Proposition~\ref{prop:AllMaximalsSameParityTypes}.  Then $t_2,t_3 \in \otype(X_{\Phi(A_p)})$, so $\gen(A_p)=*4$ by Proposition~\ref{prop:PossibleGENs}.
\end{proof}

\end{subsection}

%%%--------------------------------------------------------------%%%

\begin{subsection}{Outcomes for Alternating Groups}

%%%--------------------------------------------------------------%%%

Theorems~\ref{thm:AlternatingDNG} and~\ref{thm:AlternatingGEN} immediately yield the following result.
\begin{corollary}
The first player has a winning strategy for
   \begin{itemize}
\item $\dng(A_n)$ if and only if $n \in \{3,4\}$ or $n$ is a $\zeta$-prime;
\item $\gen(A_n)$ if and only if $n \not\in \{1,2\}$. 
   \end{itemize}
\end{corollary}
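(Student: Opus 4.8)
The plan is to read off both statements directly from the nim-number computations already established, using the basic fact recorded in Section~\ref{subsec:Games} that an impartial game is a win for the second player exactly when its nim-number is $0$. Since every play of these games terminates in finitely many moves, the games are determined, so ``the first player has a winning strategy'' is precisely the negation of ``the second player has a winning strategy.'' Hence the first player wins $\dng(A_n)$ (respectively $\gen(A_n)$) if and only if the corresponding nim-number is nonzero, and the whole corollary reduces to inspecting when Theorems~\ref{thm:AlternatingDNG} and~\ref{thm:AlternatingGEN} produce a nonzero value.

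For the avoidance game I would invoke Theorem~\ref{thm:AlternatingDNG}. The game $\dng(A_n)$ is defined only for $n \geq 3$, since $A_1$ and $A_2$ are trivial; and there Theorem~\ref{thm:AlternatingDNG} gives $\dng(A_n) = *3$ when $n \in \{3,4\}$ or $n$ is a $\zeta$-prime, and $\dng(A_n) = *0$ otherwise. A nonzero nim-number occurs exactly in the former case, which is the content of the first bullet.

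For the achievement game I would invoke Theorem~\ref{thm:AlternatingGEN}: there $\gen(A_n) = *0$ if and only if $n \in \{1,2\}$, while for every $n \geq 3$ the value is one of $*1$, $*2$, $*3$, $*4$, hence nonzero. Therefore the first player has a winning strategy for $\gen(A_n)$ precisely when $n \notin \{1,2\}$, which is the second bullet.

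I do not anticipate any real obstacle: the substantive work is entirely contained in the two preceding theorems, and the only point requiring even minor care is the appeal to determinacy, so that the first player's having a winning strategy is genuinely complementary to the second player's having one; this is immediate from the finiteness of every play of $\dng(A_n)$ and $\gen(A_n)$.
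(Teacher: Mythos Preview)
Your proposal is correct and matches the paper's approach: the paper states that the corollary follows immediately from Theorems~\ref{thm:AlternatingDNG} and~\ref{thm:AlternatingGEN}, and your argument simply unpacks this by noting that the first player wins exactly when the nim-number is nonzero. Your remarks on determinacy and on the domain of $\dng(A_n)$ are accurate but more detailed than the paper bothers to make explicit.
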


\end{subsection}

\end{section}

%%%--------------------------------------------------------------%%%

\begin{section}{Further Questions}

%%%--------------------------------------------------------------%%%

We conclude with a few open problems.

\begin{enumerate}
\item Recall that every finite simple group is in $\Gamma_1$ by~\cite{Guralnick2000}.  It is well-known that many finite simple groups have the property that every maximal subgroup has even order (see~\cite{BeneshErnstSiebenDNG},~\cite{Atlas}, and~\cite{WilsonBook}). For such $G$, $\dng(G)=*0$ by the main results from~\cite{BeneshErnstSiebenDNG} and $\gen(G)=*1$ by Corollary~\ref{cor:MainGENCorollary}.  Can we determine the nim-numbers for $\dng(G)$ and $\gen(G)$ for every finite simple group $G$?   
\item Can we determine the nim-numbers for $\dng(G)$ and $\gen(G)$ if $G$ is almost simple?
\item Can the conditions from Proposition~\ref{prop:PossibleGENs} be translated into group-theoretic conditions?  For instance, can one describe when $t_2 \in \otype(X_{\Phi(G)})$ based on the subgroup structure of $G$? 
\end{enumerate}

\end{section}

%%%--------------------------------------------------------------%%%

\bibliographystyle{amsplain}
\bibliography{game}

\providecommand{\bysame}{\leavevmode\hbox to3em{\hrulefill}\thinspace}
\providecommand{\MR}{\relax\ifhmode\unskip\space\fi MR }
% \MRhref is called by the amsart/book/proc definition of \MR.
\providecommand{\MRhref}[2]{%
  \href{http://www.ams.org/mathscinet-getitem?mr=#1}{#2}
}
\providecommand{\href}[2]{#2}
\begin{thebibliography}{10}

\bibitem{BeneshErnstSiebenZetaOEIS}
\emph{The {O}n-{L}ine {E}ncyclopedia of {I}nteger {S}equences}, published
  electronically at \url{https://oeis.org} (2010), Sequence A270123.

\bibitem{albert2007lessons}
M.H. Albert, R.J. Nowakowski, and D.~Wolfe, \emph{Lessons in play: an
  introduction to combinatorial game theory}, AMC \textbf{10} (2007), 12.

\bibitem{anderson.harary:achievement}
M.~Anderson and F.~Harary, \emph{Achievement and avoidance games for generating
  abelian groups}, Internat. J. Game Theory \textbf{16} (1987), no.~4,
  321--325.

\bibitem{Barnes}
F.W. Barnes, \emph{Some games of {F}. {H}arary, based on finite groups}, Ars
  Combin. \textbf{25} (1988), no.~A, 21--30, Eleventh British Combinatorial
  Conference (London, 1987).

\bibitem{BeneshErnstSiebenDNG}
B.J. Benesh, D.C. Ernst, and N.~Sieben, \emph{Impartial avoidance games for
  generating finite groups}, North-Western European Journal of Mathematics
  \textbf{2} (2016), 83--102.

\bibitem{brandenburg:algebraicGames}
M.~Brandenburg, \emph{{Algebraic games}}, {\tt arXiv:1205.2884} (2012).

\bibitem{BrennerWiegoldI}
J.L. Brenner and J.~Wiegold, \emph{Two-generator groups. {I}}, Michigan Math.
  J. \textbf{22} (1975), 53--64.

\bibitem{chigira1997generating}
N.~Chigira et~al., \emph{Generating alternating groups}, Hokkaido Mathematical
  Journal \textbf{26} (1997), 435--438.

\bibitem{Atlas}
J.H. Conway, R.T. Curtis, S.P. Norton, R.A. Parker, and R.A. Wilson,
  \emph{Atlas of finite groups: maximal subgroups and ordinary characters for
  simple groups}, Oxford University Press Eynsham, 1985.

\bibitem{Dubner}
H.~Dubner, \emph{Generalized repunit primes}, Math. Comp. \textbf{61} (1993),
  no.~204, 927--930.

\bibitem{ErnstSieben}
D.C. Ernst and N.~Sieben, \emph{Impartial achievement and avoidance games for
  generating finite groups}, {\tt arXiv:1407.0784} (2014).

\bibitem{FeitThompson}
W.~Feit and J.G. Thompson, \emph{Solvability of groups of odd order}, Pacific
  J. Math. \textbf{13} (1963), 775--1029.

\bibitem{Foguel}
T.~Foguel, \emph{Finite groups with a special {$2$}-generator property},
  Pacific J. Math. \textbf{170} (1995), no.~2, 483--495.

\bibitem{Guralnick2000}
R.M. Guralnick and W.M. Kantor, \emph{Probabilistic generation of finite simple
  groups}, J. Algebra \textbf{234} (2000), no.~2, 743--792, Special issue in
  honor of Helmut Wielandt.

\bibitem{Isaacs1994}
I.M. Isaacs, \emph{Algebra: a graduate course}, Graduate Studies in
  Mathematics, vol. 100, American Mathematical Society, Providence, RI, 2009,
  Reprint of the 1994 original.

\bibitem{isaacs1995generating}
I.M. Isaacs and T.~Zieschang, \emph{Generating symmetric groups}, American
  Mathematical Monthly (1995), 734--739.

\bibitem{liebeck1987classification}
M.W. Liebeck, C.E. Praeger, and J.~Saxl, \emph{A classification of the maximal
  subgroups of the finite alternating and symmetric groups}, J. Algebra
  \textbf{111} (1987), no.~2, 365--383.

\bibitem{liebeck1985primitive}
M.W. Liebeck and J.~Saxl, \emph{Primitive permutation groups containing an
  element of large prime order}, J. Lond. Math. Soc. \textbf{2} (1985), no.~2,
  237--249.

\bibitem{SiegelBook}
A.N. Siegel, \emph{Combinatorial game theory}, Graduate Studies in Mathematics,
  vol. 146, American Mathematical Society, Providence, RI, 2013.

\bibitem{WilsonBook}
R.A. Wilson, \emph{The finite simple groups}, Graduate Texts in Mathematics,
  vol. 251, Springer-Verlag London, Ltd., London, 2009.

\end{thebibliography}

%%%--------------------------------------------------------------%%%

\end{document}